\documentclass[12pt, hyphens]{article}
\usepackage[a4paper, 
            top=1.0in, 
            bottom=1.0in, 
            left=1.0in, 
            right=1.0in,
            hmarginratio=1:1,
            bindingoffset=0.0in
]
{geometry}
\usepackage[utf8]{inputenc}
\usepackage[T1]{fontenc}
\usepackage{lmodern}
\usepackage[english]{babel}
\usepackage{indentfirst}
\usepackage{amsmath,amssymb,mathtools}
\usepackage{mathrsfs} 
\usepackage{microtype}
\usepackage{bbm}
\usepackage{xparse}
\usepackage{etoolbox}
\usepackage{xcolor}
\usepackage[numbers, sort&compress]{natbib}
\usepackage[unicode=true, pdfnewwindow]{hyperref}
\hypersetup{pdfencoding=auto, psdextra, colorlinks=false, pdfborder={0 0 1}, linktoc=page}
\usepackage[numbered]{bookmark}
\usepackage{amsthm}
\theoremstyle{plain}
\newtheorem{theorem}{Theorem}[section]
\newtheorem{lemma}[theorem]{Lemma}
\newtheorem{proposition}[theorem]{Proposition}
\newtheorem{fact}[theorem]{Fact}

\theoremstyle{definition}
\newtheorem{definition}[theorem]{Definition}
\newtheorem{remark}[theorem]{Remark}
\newtheorem{example}[theorem]{Example}
\newtheorem{assumption}[theorem]{Assumption}

\numberwithin{equation}{section}

\makeatletter
\if@titlepage%
\else
    \def\abstract@style{heading} 
    \def\abstract@size{\small}
    
    \newcommand{\setabstractstyle}[1]{%
      \def\abstract@style{#1}%
    }
    
    \newcommand{\setabstractsize}[1]{%
      \def\abstract@size{#1}%
    }
    
    \let\orig@abstract\abstract
    \let\endorig@abstract\endabstract
    
    \renewenvironment{abstract}{%
      \if@twocolumn
        \orig@abstract%
      \else
        \abstract@size%
        \edef\temp@a{inline}%
        \ifx\abstract@style\temp@a%
          \quotation
          \noindent\textbf{\abstractname.}\ %
          \ignorespaces
        \else
          \begin{center}%
          {\bfseries\abstractname\vspace{-.5em}\vspace{\z@}}%
          \end{center}%
          \quotation
          \ignorespaces
        \fi
      \fi
    }{%
      \if@twocolumn
        \endorig@abstract
      \else
        \endquotation 
      \fi
    }
\fi
\makeatother
\setabstractstyle{inline} 
\setabstractsize{\normalsize} 
\renewcommand{\proofname}{Proof}%
\makeatletter
\renewenvironment{proof}[1][\proofname]{%
  \par\pushQED{\qed}\normalfont
  \topsep6\p@\@plus6\p@\relax 
  \trivlist
  \item[\hskip\labelsep\bfseries #1\@addpunct{.}]\ignorespaces
}{%
  \popQED\endtrivlist\@endpefalse
}
\makeatother
\usepackage{tocloft}
\makeatletter
\newif\ifinappendixbookmark
\inappendixbookmarkfalse

\let\oldappendix\appendix
\renewcommand{\appendix}{%
  \oldappendix
  \inappendixbookmarktrue 
}
\makeatother

\newif\ifdisableappendixbookmark
\disableappendixbookmarkfalse

\makeatletter
\newcommand*{\AddAppendixPrefixInBookmarks}{%
  \ifinappendixbookmark
    \ifnum\bookmarkget{level}=1
      \ifdisableappendixbookmark
      \else
        \preto\bookmark@text{Appendix\space}%
      \fi
    \fi
  \fi
}
\bookmarksetup{
  addtohook=\AddAppendixPrefixInBookmarks,
}
\makeatother
\makeatletter
\renewcommand\Hy@numberline[1]{#1.\space}%
\makeatother

\makeatletter
\newif\if@appendixsec
\@appendixsecfalse
\apptocmd{\appendix}{%
  \@appendixsectrue
}{}{}

\renewcommand{\@seccntformat}[1]{%
  \if@appendixsec
    \ifstrequal{#1}{section}{%
      Appendix~\thesection.\hskip.5em
    }{%
      \csname the#1\endcsname.\hskip.5em
    }%
  \else
    \csname the#1\endcsname.\hskip.5em
  \fi
}
\makeatother
\makeatletter
\newif\ifinappendixtoc
\inappendixtocfalse

\let\oldappendixtoc\appendix
\renewcommand{\appendix}{%
  \oldappendixtoc
  \addtocontents{toc}{\protect\inappendixtoctrue}%
}
\makeatother

\makeatletter
\let\oldnumberline\numberline
\renewcommand{\numberline}[1]{%
  \ifinappendixtoc
    \oldnumberline{\vphantom{[]}#1.\hskip.5em}
  \else
    \oldnumberline{#1.\vphantom{[]}}%
  \fi
}
\makeatother

\makeatletter
\let\oldcontentsline\contentsline

\renewcommand{\contentsline}[4]{%
  \ifinappendixtoc
    \ifstrequal{#1}{section}{%
      \oldcontentsline{#1}{\vphantom{[]}Appendix~#2}{\vphantom{[]}#3}{#4}%
    }{%
      \oldcontentsline{#1}{\vphantom{[]}#2}{\vphantom{[]}#3}{#4}%
    }%
  \else
    \oldcontentsline{#1}{\vphantom{[]}#2}{\vphantom{[]}#3}{#4}%
  \fi
}
\makeatother
\makeatletter
\AtBeginDocument{%
    \pretocmd{\NAT@hyper@}{\vphantom{[]}}{}{}
    \def\def@NAT@last@yr#1{%
     \protected@edef\NAT@last@yr{%
      #1%
      \noexpand\mbox{%
       \noexpand\hyper@natlinkstart{\@citeb\@extra@b@citeb}%
       {\noexpand\citenumfont{\NAT@num\vphantom{[]}}}%
       \noexpand\hyper@natlinkend
      }%
     }%
    }%
}
\makeatother

\let\oldbibliographystyle\bibliographystyle
\renewcommand{\bibliographystyle}[1]{%
    \addtocontents{toc}{\protect\inappendixtocfalse}
    \disableappendixbookmarktrue
    \oldbibliographystyle{#1}%
}

\AtEndDocument{
\addcontentsline{toc}{section}{\refname}
}

\let\LTXlabel\label
\makeatletter
\renewcommand{\footnote}[2][\empty]{%
  \nolinebreak%
  \refstepcounter{footnote}
  \global\edef\sfootnote@arabic{\arabic{footnote}}%
  \global\protected@edef\sfootnote@the{\thefootnote}%
  \ltx@ifpackageloaded{hyperref}{%
    \ifHy@hyperfootnotes%
      \refstepcounter{Hfootnote}
      \global\let\Hy@saved@currentHref\@currentHref%
      \hyper@makecurrent{Hfootnote}%
      \global\let\Hy@footnote@currentHref\@currentHref%
      \global\let\@currentHref\Hy@saved@currentHref%
    \fi%
  }{}%
  \xdef\sfootnote@opt{#1}%
  \ifx\sfootnote@opt\empty%
    \footnotetext{\LTXlabel{fnr:\sfootnote@arabic}#2}%
  \else%
    \ltx@ifpackageloaded{hyperref}{%
      \footnotetext[#1]{\phantomsection\LTXlabel{fnr:\sfootnote@arabic}#2\vphantom{Xg}}%
    }{%
      \footnotetext[#1]{\LTXlabel{fnr:\sfootnote@arabic}#2}%
    }%
  \fi%
  \ltx@ifpackageloaded{hyperref}{%
    \ifHy@hyperfootnotes%
      \hbox{\@textsuperscript{\,\normalfont\ref{fnr:\sfootnote@arabic}\vphantom{Xg}}}
    \else%
      \hbox{\@textsuperscript{\normalfont\ref*{fnr:\sfootnote@arabic}}}%
    \fi%
  }{%
    \hbox{\@textsuperscript{\normalfont\ref{fnr:\sfootnote@arabic}}}%
  }%
  \;%
  \ignorespaces%
}
\makeatother

\makeatletter
\newif\if@maketitle@nohyper
\let\orig@maketitle\maketitle
\renewcommand{\maketitle}{%
  \@maketitle@nohypertrue
  \orig@maketitle
  \@maketitle@nohyperfalse
}
\makeatother

\makeatletter
\renewcommand{\footnotemark}[1][]{%
  \leavevmode
  \ifhmode\edef\@x@sf{\the\spacefactor}\nobreak\fi
  \def\@tempa{#1}%
  \ifx\@tempa\@empty
    \stepcounter{footnote}%
    \protected@xdef\@thefnmark{\thefootnote}%
  \else
    \begingroup
      \c@footnote #1\relax
      \unrestored@protected@xdef\@thefnmark{\thefootnote}%
    \endgroup
  \fi
  \ltx@ifpackageloaded{hyperref}{%
  \ifHy@hyperfootnotes
    \if@maketitle@nohyper
    \else
      \refstepcounter{Hfootnote}%
      \global\let\Hy@saved@currentHref\@currentHref
      \hyper@makecurrent{Hfootnote}%
      \global\let\Hy@footnote@currentHref\@currentHref
      \global\let\@currentHref\Hy@saved@currentHref
    \fi
  \fi
  }{}%
  \textsuperscript{%
    \normalfont
    \ltx@ifpackageloaded{hyperref}{%
      \ifHy@hyperfootnotes
        \if@maketitle@nohyper
          \@thefnmark
        \else
          \,\hyper@linkstart{link}{\Hy@footnote@currentHref}%
          \@thefnmark\vphantom{Xg}%
          \hyper@linkend
        \fi
      \else
        \@thefnmark
      \fi
    }{%
       \@thefnmark
    }%
  }%
  \ifhmode\spacefactor\@x@sf\fi
  \;\ignorespaces
}
\makeatother
\newcommand{\myref}[1]{%
  \mbox{\ref{#1}\vphantom{()}}%
}
\makeatletter
\DeclareRobustCommand{\myhyperref}[2]{%
  \mbox{%
    \hyperref[#2]{%
     \ref*{#1}\;\textup{\tagform@{\ref*{#2}}}\,%
    }\vphantom{()}%
  }%
}
\makeatletter
\DeclareRobustCommand{\myeqhyperrefrange}[4]{%
  \mbox{%
    \hyperref[#2]{%
      \ref*{#1}\;\textup{\tagform@{\ref*{#3}}}\,--\,\textup{\tagform@{\ref*{#4}}}\,%
    }\vphantom{()}%
  }%
}
\makeatother

\newcommand{\myEqref}[1]{\mbox{\hyperref[#1]{(\ref*{#1})}\vphantom{()}}}
\makeatletter
\NewDocumentCommand{\runinsectionstar}{o m}{%
  \IfNoValueTF{#1}{%
    \def\runin@size{\Large}%
  }{%
    \def\runin@size{#1}%
    \if\relax\detokenize{#1}\relax
      \def\runin@size{\Large}%
    \fi
  }%
  \addcontentsline{toc}{section}{#2}%
  \@startsection{section}%
    {1}%
    {\z@}%
    {-3.5ex \@plus -1ex \@minus -.2ex}%
    {-1em}%
    {\normalfont\runin@size\bfseries}*%
    {#2.}%
}
\makeatother
\makeatletter
\NewDocumentCommand{\runinsubsectionstar}{o m}{%
  \IfNoValueTF{#1}{%
    \def\runin@size{\large}%
  }{%
    \def\runin@size{#1}%
    \if\relax\detokenize{#1}\relax
      \def\runin@size{\large}%
    \fi
  }%
  \addcontentsline{toc}{subsection}{#2}%
  \@startsection{subsection}%
    {2}%
    {\z@}%
    {-3.25ex \@plus -1ex \@minus -.2ex}%
    {-1em}%
    {\normalfont\runin@size\bfseries}*%
    {#2.}%
}
\makeatother

\makeatletter
\NewDocumentCommand{\runinsubsection}{o m}{%
  \IfNoValueTF{#1}{%
    \def\runin@size{\large}%
  }{%
    \def\runin@size{#1}%
    \if\relax\detokenize{#1}\relax
      \def\runin@size{\large}%
    \fi
  }%
  \@startsection{subsection}%
    {2}%
    {\z@}%
    {-3.25ex \@plus -1ex \@minus -.2ex}%
    {-1em}%
    {\normalfont\runin@size\bfseries}%
    {#2.}%
}
\makeatother

\DeclareMathOperator*{\graph}{gph}
\DeclareMathOperator*{\minimize}{minimize}
\DeclareMathOperator*{\cl}{cl}
\DeclareMathOperator*{\interior}{int}
\DeclareMathOperator*{\conv}{conv\!}

\DeclareMathOperator*{\dist}{dist}

\newcommand{\isomorph}{%
  \mathrel{\lower0.2ex\hbox{$\cong$}}
}

\providecommand{\scalarp}[1]{\ensuremath \langle{#1}\rangle}

\providecommand{\indicator}{\ensuremath \mathbbm{1}}

\newcommand{\R}{\ensuremath \mathbb{R}}
\newcommand{\N}{\ensuremath \mathbb{N}}

\newcommand{\Rm}{\ensuremath \R^m}

\newcommand{\Rq}{\ensuremath \R^q}

\newcommand{\Ncal}{\ensuremath \mathcal{N}}

\usepackage{academicons}
\title{Piecewise smooth functions and conservative fields: calculus for nonsmooth nonconvex optimization beyond stratification}
\author{
Cheik Traor\'e\thanks{Toulouse School of Economics, Toulouse Capitole University, Toulouse, France \\ (\texttt{cheik.traore@tse-fr.eu}).}
}
\date{\today}
\begin{document}\maketitle
\begin{abstract}
In this paper, we show that the piecewise gradient associated with a representation of a continuous piecewise-$C^{p}$ function is a selection of a conservative field. Specifically, we prove that a set-valued map whose selections include the piecewise gradients, also called associated gradients, has the chain rule property along Lipschitz curves. As a consequence, continuous piecewise smooth functions are path differentiable and their Clarke subdifferentials satisfy the chain rule property. These results establish a connection between a representation-based calculus for nonsmooth automatic differentiation and the conservative-field framework. From an algorithmic perspective, we show that bounded iterates of the stochastic piecewise-gradient method converge to the set of conservative critical points. With an additional Lebesgue-null interface condition, a generic stepsize scaling, and a generic initialization, this result holds for the Clarke critical set. Finally, we demonstrate that, even with the interface condition, the graph of a Lipschitz continuous and piecewise-$C^{\infty}$ function need not admit a $C^1$ stratification satisfying Whitney's condition (a). Thus, the analysis developed here does not fall within the setting of Whitney stratifiable functions (e.g., semialgebraic functions).
\end{abstract}

\noindent {\bf\small Keywords and phrases.} {\small Automatic differentiation, conservative field, nonsmooth nonconvex optimization, piecewise smooth function, piecewise gradient, stochastic first-order method, variational analysis, Whitney stratification.}\\ 
[1.5ex]
\noindent
{\bf\small 2020 Mathematics Subject Classification.} {\small Primary 49J53, 90C15; Secondary 90C56, 49J52, 65K10, 62M45, 90C06}.
\section{Introduction}

\runinsubsectionstar[\normalsize]{Background and objectives} Automatic differentiation has become a fundamental tool in modern machine learning. It powers the training of virtually all deep neural networks by enabling efficient gradient computation through backpropagation. While its empirical success is undeniable, a rigorous theoretical foundation for its use in nonsmooth settings has remained incomplete until recently.

At the heart of this issue lies the chain rule, the mathematical principle underlying automatic differentiation. For compositions of smooth elementary functions and operations, the chain rule ensures that derivatives can be propagated through the computation. Correctness guarantees for automatic differentiation are generally formulated under smoothness assumptions, but in nonsmooth settings, a more delicate analysis is required \cite{griewank2008evaluating}. Yet many machine learning models—notably neural networks using ReLU and other nonsmooth activation functions—are not everywhere differentiable. At nonsmooth points, automatic differentiation acts on a particular program representation, and its output need not belong to classical generalized derivatives such as the Clarke subdifferential \cite{bolte2020mathematical,bolte2021conservative}. This discrepancy has created a gap between the widespread practical use of nonsmooth automatic differentiation and its mathematical justification, motivating the development of new frameworks that restore its validity in those settings.

A major breakthrough in resolving this issue was achieved through the work of Bolte and Pauwels \cite{bolte2021conservative}. For locally Lipschitz functions, they introduced the notion of conservative fields, a class of set-valued mappings characterized by the validity of a chain rule along curves. This seemingly simple principle provides the necessary structure to extend differential calculus beyond smooth settings. Although the framework is general in scope, it shows in particular that, for semialgebraic functions\footnote{These are functions whose graphs are semialgebraic sets. A semialgebraic set is a finite union of sets that can be described by finitely many polynomial equalities and inequalities. Semialgebraic functions form a subclass of definable functions in an o-minimal structure \cite{van1996geometric}, which in turn form a subclass of functions whose graphs admit a Whitney stratification; see Definitions~\myref{def:strat}~and~\myref{def:whit}. By an abuse of language, we refer to the latter as Whitney stratifiable functions.}, the objects produced by automatic differentiation can be interpreted as selections of conservative fields \cite{bolte2020mathematical}. 
Indeed, suppressing some technical details, a finite numerical program built from semialgebraic operations and branching conditions, with a continuous input–output map, induces a continuous semialgebraic function $f\colon\Rm\to\R$ that admits a finite selection representation
$$
f(x)= f_{s(x)}(x) = \sum_{i=1}^{N}\indicator_{\mathcal{A}_i}(x)f_i(x),
$$
where $\indicator_{\mathcal{A}_i}(x) = 1$ if $x \in \mathcal{A}_i$ and $ \indicator_{\mathcal{A}_i}(x) = 0$ otherwise, the sets
$
\mathcal{A}_i\coloneq\{x\in\Rm:s(x)=i\}
$
are semialgebraic, $s\colon \Rm \to \{1,\ldots,N\}$ is a semialgebraic selector,
and the $f_i\colon \Rm \to \R$ are sufficiently regular semialgebraic path functions.
The generalized gradient---or selection derivative, in the authors’ terminology \cite{bolte2020mathematical}---associated with this representation is
\begin{equation}\label{eq:selectingradient}
\widehat{\nabla}^{s}f(x)
:=\nabla f_{s(x)}(x)
=\sum_{i=1}^{N}\indicator_{\mathcal{A}_i}(x)\nabla f_i(x).
\end{equation}
They proved in \cite{bolte2020mathematical} that automatic differentiation applied to the semialgebraic function $f$ outputs a selection derivative, which in turn is a selection of a conservative field for $f$. The conservative-field paradigm thereby provides a rigorous mathematical justification for automatic differentiation in a broad class of nonsmooth problems. Beyond this theoretical foundation, conservative fields retain many of the key properties of classical gradients, making them powerful tools for analyzing nonsmooth optimization algorithms and their convergence.

More recently, for Lipschitz continuous and piecewise-$C^1$ functions, Després \cite{despres2025associated} uncovered a similar piecewise construction inspired by the work of Murat and Trombetti \cite{Murat2003chainrule}. The latter authors established a chain rule for the composition of a Lipschitz function with a Lipschitz piecewise-$C^{1}$ mapping, revealing that a meaningful differential calculus can be recovered despite the lack of smoothness. Després' key idea is to exploit an explicit representation of a piecewise-$C^{1}$ function as above. Given a finite Borel partition on which the function coincides with smooth components, one defines a piecewise gradient, which Després termed an associated gradient, by selecting the gradient of the corresponding smooth piece at each point, similarly to \eqref{eq:selectingradient}. Thanks to the Murat--Trombetti result, this piecewise gradient satisfies the chain rule, providing a simple and constructive differential object that is particularly well-suited to analyzing nonsmooth automatic differentiation for piecewise-$C^1$ functions. Under various assumptions and definitions, other representation-based derivatives for piecewise-defined functions have also appeared in the literature; see \cite{Scholtes2012piecewise,Khan2015derivate,billingsley2022generalized,lee2020correctness, griewank2008evaluating} and the references therein.

Although both approaches, in their specific functional contexts, successfully address the pitfalls of nonsmooth automatic differentiation, the precise relationship between piecewise gradients and conservative fields had not been known. Specifically, it was not known whether the representation-dependent piecewise gradients introduced for piecewise-$C^{1}$ functions fit within the conservative-field framework, as the selection derivatives \eqref{eq:selectingradient} introduced for definable functions do \cite{bolte2020mathematical}. Moreover, unlike selection derivatives, the role of piecewise gradients (associated gradients) in optimization remains poorly understood. Initial work has focused primarily on the calculus properties that make them useful for nonsmooth automatic differentiation \cite{despres2025associated, despres2026limit}, while their effect on nonsmooth gradient-based training dynamics is explicitly identified as an open question in \cite[Section~5.2]{despres2025associated}. Likewise, no optimization application has been developed for the related intensional derivatives introduced in \cite{lee2020correctness} for piecewise-analytic functions defined over finite or countably infinite analytic partitions. Accordingly, for piecewise-$C^p$ functions, our paper has three objectives: (i) to establish the relationship between piecewise gradients and the optimization-friendly framework of conservative fields; (ii) to derive convergence guarantees for stochastic optimization methods driven by piecewise gradients; and (iii) to determine whether these results fall within the Whitney stratifiable setting used in previous work, such as \cite{bolte2020mathematical}. We accomplish the first two objectives and, for the third, show that the piecewise-$C^p$ setting is not subsumed by the Whitney stratifiable one.

\runinsubsectionstar[\normalsize]{Results and related work} First, for $p \in (\N\setminus\{0\}) \cup \{\infty\}$, we consider continuous piecewise-$C^{p}$ functions\footnote{In what follows, simple continuity will be assumed by default unless stated otherwise. We therefore refer to continuous piecewise-$C^p$ functions simply as piecewise-$C^p$ functions.}, with pieces defined on a partition of $\Rm$ made of finitely many Borel sets that are not necessarily smooth manifolds. For the first objective, at a given point, we introduce a set-valued mapping $D_f$ that collects all closure-active component gradients of these pieces; see~\eqref{eq:defconsvfield}. Naturally, the corresponding piecewise gradient is a selection of this set-valued map. We then prove that $D_f$ satisfies the chain rule along every Lipschitz curve and therefore defines a conservative field; see the main result, Theorem~\myref{theo:main}. As an immediate consequence, piecewise-$C^p$ functions are path differentiable---see \cite{bolte2021conservative} and similar definitions in \cite{valadier1989entrainement, borwein1998chain}---and the Clarke subdifferential exhibits the same chain rule property along curves. As far as we are aware, neither the chain rule for the Clarke subdifferential of a piecewise-$C^p$ function nor the resulting path differentiability has been explicitly stated in the literature, although both follow by combining existing results. In fact, by \cite[Proposition~2.3]{Sun2008lowner}, the Clarke subdifferential of a piecewise-$C^1$ function is a semismooth derivative \cite{mifflin1977semismooth, qi1993nonsmooth}. Proposition~6.2 of \cite{bolte2023subgradient} then implies that it is a conservative gradient and hence satisfies the chain rule; consequently, such functions are path differentiable. However, this argument does not extend to our main selection result about piecewise gradients, because a piecewise gradient need not belong to—and therefore need not be a selection of—the Clarke subdifferential; see Remark~\myref{rmk:partitionexample}. It does, however, always belong to the set-valued map $D_f$ that we construct and prove to be a conservative field in Theorem~\myref{theo:main}. We then prove in Proposition~\myref{prop:piecewise-cp-semismooth2} that the convex hull of the conservative field constructed here is also a semismooth generalized gradient in the sense of Norkin~\cite{ermol1998stochastic}\footnote{Norkin’s semismoothness is formulated relative to an admissible set-valued generalized derivative, not necessarily the Clarke subdifferential.}. However, general convex-valued conservative fields for piecewise-$C^p$ functions need not be semismooth. Example~\myref{ex:conservative-not-norkin} provides a counterexample.

In Section~\myref{sect:application}, we tackle the second objective by investigating the relevance of piecewise gradients for optimization. In fact, we use piecewise gradients to drive a stochastic first-order algorithm, termed the stochastic piecewise-gradient method. Assuming that $p\geq m$, bounded iterates generated by the method with vanishing stepsizes converge almost surely (a.s.) to a conservative critical set, i.e., the zeros of a conservative field. If, in addition, $p \geq 2$ and a Lebesgue-null interface condition~\eqref{eq:interface}\,\footnote{For example, the condition is satisfied for ReLU-based neural networks, which are piecewise smooth functions; see Remark~\myref{rmk:relu-interface}.} holds, we obtain the same conclusion for the Clarke critical set under the genericity conditions stated in Theorem~\myref{theo:clarkcrit}. 
The interface condition is sufficient to ensure that the objective is locally $C^2$ almost everywhere (a.e.), a key property for the proof technique. Without this condition, we show in Proposition~\myref{claim:counter} that a merely Lipschitz continuous and piecewise-$C^{\infty}$ function need not be locally $C^1$ on a set of positive measure. 

Because we prove that the convex hull of the conservative field $D_f$ is a (semismooth) generalized gradient, our convergence result is related to that in \cite{ermol1998stochastic}, where the Norkin generalized gradient is used to study the stochastic generalized-gradient method. The result in \cite{ermol1998stochastic} guarantees convergence only to conservative---or, in their terminology, generalized gradient---critical points. Other analyses either assume Whitney stratifiability directly or impose structural hypotheses for which Whitney stratifiable functions---in particular, semialgebraic and definable functions---are the known examples provided; see \cite{davis2020stochastic, bolte2023subgradient, lai2026convergence, xiao2024adam, ding2025stochastic, bianchi2022convergence, le2024nonsmooth}. 

Finally, in Theorem~\myref{theo:counter2} of Section~\myref{sect:counter}, we address the third objective and show that Lipschitz continuous and piecewise-$C^{\infty}$ functions with the Lebesgue-null interface condition~\eqref{eq:interface} are not necessarily Whitney stratifiable. Without the interface condition, we observe from Proposition~\myref{claim:counter} that Lipschitz continuous and piecewise-$C^{\infty}$ functions may fail to be locally $C^1$ on a set of positive measure, hence they may not be Whitney stratifiable; see Remark~\myref{rmk:counter}. Theorem~\myref{theo:counter2} strengthens this observation by exhibiting a Lipschitz continuous and piecewise-$C^{\infty}$ function that satisfies the Lebesgue-null interface condition~\eqref{eq:interface}---hence is locally $C^p$ a.e. for arbitrary $p \in \N$ by Lemma~\ref{lem:piecewiseCplocallyC2ae}---but is not Whitney stratifiable. From a theoretical viewpoint, a consequence of this result is that our analyses are not covered by existing analyses restricted to Whitney stratifiable functions, and that piecewise-$C^p$ functions possess sufficient structure---beyond Whitney stratification---to support a calculus compatible with nonsmooth nonconvex optimization. However, it is worth noting that many of the most practically important piecewise smooth functions are definable in some o-minimal structure, including, in particular, the ubiquitous ReLU-based neural networks. Therefore, from a practical viewpoint, our result offers a complementary perspective and an alternative geometric framework to Whitney stratification that can be exploited in the analysis of optimization methods for such functions.

\section{Preliminaries}
From an optimization perspective, we restrict our analysis to real-valued functions. Nevertheless, all our results concerning the link between piecewise derivatives and conservative fields extend seamlessly to vector-valued functions, in which case conservative fields are replaced by conservative Jacobians; see \cite[Section 3.3]{bolte2021conservative}. 

Throughout this document, the reference measure is the Lebesgue measure, and explicit mention of it will be omitted whenever there is no ambiguity.
\subsection{Piecewise smooth functions and piecewise gradients}
Let $m, {J} \in \N \setminus \{0\}$. A finite \emph{Borel partition} $\{P^{j}\}_{j=1}^{{J}}$ of $\Rm$ is defined as follows:
$
    \displaystyle \Rm = \bigsqcup_{j=1}^{{J}} P^{j},
$
where each $P^{j}$ is a nonempty Borel subset of $\Rm$, and
$P^{\ell} \cap P^i = \varnothing$ for all $\ell,i \in \{1,\ldots,{J}\}$ with $\ell \neq i$. 

We call the sets in the partition its \emph{parts}.

\begin{definition}\label{def:piecewisec1}
Let $p\in \N \cup \{\infty\}$ and let $f\colon \Rm\to\R$ be a continuous function. We say that $f$ is \emph{piecewise $p$-times continuously differentiable}, or \emph{piecewise-$C^{p}$}, if there exist a finite Borel partition
$
\Rm=\bigsqcup_{j=1}^{{J}} P^{j},
$
and $C^{p}$ functions $f^1,\ldots,f^{{J}}\colon \Rm\to\R$ such that
\begin{equation}\label{eq:representation}
f(x)=\sum_{j=1}^{{J}}\indicator_{P^{j}}(x)\,f^{j}(x),
\quad \forall x\in\Rm.
\end{equation}
The pair
$
\left(\{P^j\}_{j=1}^{J},\{f^j\}_{j=1}^{J}\right)
$
is called a \emph{representation} of $f$.
\end{definition}

In this document, we call any function piecewise smooth if it is piecewise-$C^p$ for some $p\in \left(\N \setminus \{0\}\right) \cup \{\infty\}$. We also recall that all functions are assumed to be continuous. For convenience, we therefore use the terms ``continuous piecewise-$C^p$ functions'' and ``piecewise-$C^p$ functions'' interchangeably.

\begin{assumption}[Standing assumption]
    Unless stated otherwise, we assume that $p\in \left(\N \setminus \{0\}\right) \cup \{\infty\}$.
\end{assumption}

\begin{fact}
    Every continuous piecewise-$C^p$ function defined as in Definition~\myref{def:piecewisec1} is locally Lipschitz continuous; see \cite[Corollary~4.1.1]{Scholtes2012piecewise}.
\end{fact}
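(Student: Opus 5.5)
The plan is to show that near any point the function coincides with one of finitely many smooth selection functions, and then apply the mean value theorem along the segment to whichever of these is active at each parameter. Fix $\bar x\in\Rm$ and a closed ball $B=\bar B(\bar x,r)$. Let $I(\bar x)=\{j: \bar x\in\cl P^j\}$ be the index set of closure-active parts. Taking $r$ small enough, every part $P^j$ meeting $B$ has $\bar x$ in its closure, so only indices in $I(\bar x)$ are relevant on $B$. By continuity of $f$ and of each $f^j$, for $j\in I(\bar x)$ we get $f^j(\bar x)=f(\bar x)$, since $f^j=f$ on $P^j$ and we may take a limit of points of $P^j$ converging to $\bar x$. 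Moreover, on $B$ we have $f(x)\in\{f^j(x): j\in I(\bar x)\}$ for every $x$, and the same membership holds on all of $\cl P^j\cap B$, again by continuity.

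The key step is to prove a Lipschitz bound on $B$ for the finitely many $C^1$ functions $f^j$. Each $f^j$ is $C^p$ with $p\ge1$, so it is Lipschitz on $B$ with constant $L=\max_{j}\max_{B}\|\nabla f^j\|$. Take $x,y\in B$ and consider the segment $\gamma(t)=x+t(y-x)$. I would argue that $\phi(t)=f(\gamma(t))$ is a continuous function on $[0,1]$ which, at each $t$, equals one of the finitely many functions $\phi_j(t)=f^j(\gamma(t))$, all of which are $L\|y-x\|$-Lipschitz. The standard lemma is that a continuous selection of finitely many $K$-Lipschitz functions on an interval is itself $K$-Lipschitz.

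I would prove this lemma by the following argument. Suppose $|\phi(t)-\phi(s)|>K|t-s|+\varepsilon$ for some $s<t$. Let $T$ be the set of $\tau\in[s,t]$ with $|\phi(\tau)-\phi(s)|\le K(\tau-s)+\varepsilon(\tau-s)/(t-s)$, and let $\tau^*=\sup T$. By continuity, $\tau^*\in T$. At $\tau^*$, some $\phi_j$ is active, i.e.\ $\phi(\tau^*)=\phi_j(\tau^*)$. Since the set of active indices is closed in $\tau$ (by continuity of the $\phi_j$ and $\phi$), and there are finitely many $j$, an index $j$ active at points $\tau_n\downarrow\tau^*$ must also be active at $\tau^*$. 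This gives $|\phi(\tau_n)-\phi(\tau^*)|\le K(\tau_n-\tau^*)$, which extends $T$ beyond $\tau^*$ and contradicts maximality, unless $\tau^*=t$. The pigeonhole choice of an index active along a sequence $\tau_n\downarrow\tau^*$ is the only delicate point. Rather than the sup argument, I could alternatively use a compactness cover: around each $\tau$ there is a right neighborhood on which $|\phi-\phi(\tau)|\le K|\cdot-\tau|$; otherwise a sequence would violate the bound for every index, contradicting the pigeonhole argument. Chaining these neighborhoods then finishes the lemma.

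Consequently $|f(y)-f(x)|\le L\|y-x\|$ on $B$, which is local Lipschitz continuity. I expect the main obstacle to be precisely this selection lemma, since the parts $P^j$ are only Borel, so the segment may cross them in a wildly fragmented (e.g.\ Cantor-like) pattern. That is why the argument must rely only on continuity and finiteness of the index set, never on any structure of the partition.
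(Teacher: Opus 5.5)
Your proof is correct and is essentially the argument behind the result the paper invokes. The paper gives no proof of its own, only the citation to Scholtes' Corollary~4.1.1, which rests on exactly your lemma: a continuous selection of finitely many $L$-Lipschitz functions on a convex set is $L$-Lipschitz, proved along a segment using closedness of the active-index sets and a pigeonhole over the finitely many indices. The localization to $I(\bar x)$ and the $\varepsilon$ slack in your sup argument are harmless but unnecessary, since $f$ is already a continuous selection of all of $f^1,\ldots,f^J$ on the whole of $\mathbb{R}^m$.
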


Despr\'es \cite{despres2025associated} presented associated gradients, i.e., piecewise gradients, as a tool for automatic differentiation. Combined with the result of Murat and Trombetti \cite{Murat2003chainrule}, these gradients enable a chain rule suitable for automatic differentiation.

\begin{definition}[Piecewise gradient]
Let $f\colon \Rm \to \R$ be piecewise-$C^{p}$, and let $\{P^{j}\}_{j=1}^{{J}}$ and $\{f^{j}\}_{j=1}^{{J}}$ denote a Borel partition and a collection of $C^{p}$ functions defining a representation of $f$ as in Definition~\myref{def:piecewisec1}. The \emph{piecewise gradient} or \emph{associated gradient} of $f$ with respect to this representation is defined by
$$
\widetilde{\nabla}f(x)
=
\sum_{j=1}^{J}
\indicator_{P^{j}}(x)\,
\nabla f^{j}(x),
\quad
\forall x\in\Rm.
$$
Note that the piecewise gradient generally depends on the chosen representation.
\end{definition}

\begin{lemma}\label{lem:vectorspace}
Let $f,g\colon\Rm\to\R$ be piecewise-$C^p$, with respective
representations
$$
\left(
\{P_f^j\}_{j=1}^{J_f},
\{f^j\}_{j=1}^{J_f}
\right)
\quad\text{and}\quad
\left(
\{P_g^{\ell}\}_{\ell=1}^{J_g},
\{g^{\ell}\}_{\ell=1}^{J_g}
\right).
$$
Let $\beta\in\R$. Then $\beta f+g$ is piecewise-$C^p$. In
particular, $f+g$ and $\beta f$ are piecewise-$C^p$. 

If the piecewise gradient of $\beta f+g$ is defined
using the representation based on the \emph{common refinement} of the partitions $\{P_f^j\}_{j=1}^{J_f}$ and $\{P_g^{\ell}\}_{\ell=1}^{J_g}$, while the piecewise gradients of $f$ and
$g$ are defined using their original representations, then
$$
\widetilde{\nabla}(\beta f+g)
=
\beta\,\widetilde{\nabla}f
+
\widetilde{\nabla}g.
$$
\end{lemma}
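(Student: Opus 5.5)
The plan is to build an explicit representation of $\beta f+g$ on the common refinement and then compare piecewise gradients pointwise. Define, for each pair $(j,\ell)\in\{1,\ldots,J_f\}\times\{1,\ldots,J_g\}$, the set $Q^{j\ell}\coloneq P_f^j\cap P_g^{\ell}$. Each $Q^{j\ell}$ is Borel, being an intersection of two Borel sets. The sets $Q^{j\ell}$ are pairwise disjoint: if $(j,\ell)\neq(j',\ell')$, then either $j\neq j'$ or $\ell\neq\ell'$, and the corresponding parts of the original partitions are disjoint. Their union is $\Rm$, since every $x$ lies in exactly one $P_f^j$ and exactly one $P_g^\ell$. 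After discarding the empty $Q^{j\ell}$, which is needed because parts are required to be nonempty, we obtain a finite Borel partition, namely the common refinement. On $Q^{j\ell}$ we take the component $h^{j\ell}\coloneq \beta f^j+g^\ell$. It is $C^p$ on $\Rm$ as a linear combination of $C^p$ functions.

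Next I verify the representation identity~\eqref{eq:representation}. Fix $x\in\Rm$ and let $(j,\ell)$ be the unique pair with $x\in Q^{j\ell}$. Then $x\in P_f^j$ and $x\in P_g^\ell$, so~\eqref{eq:representation} for $f$ and $g$ gives $f(x)=f^j(x)$ and $g(x)=g^\ell(x)$. Hence
$$
(\beta f+g)(x)=h^{j\ell}(x)=\sum_{(j',\ell')}\indicator_{Q^{j'\ell'}}(x)\,h^{j'\ell'}(x).
$$
Continuity of $\beta f+g$ follows from the continuity of $f$ and $g$. Therefore $\beta f+g$ is piecewise-$C^p$. Taking $\beta=1$ gives the claim for $f+g$. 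Taking $g=0$, with the trivial one-part representation $\{\Rm\}$ and component $0$, gives the claim for $\beta f$.

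For the gradient identity, fix $x$ and the same pair $(j,\ell)$. By definition of the piecewise gradient for the refined representation,
$$
\widetilde{\nabla}(\beta f+g)(x)=\nabla h^{j\ell}(x)=\beta\nabla f^j(x)+\nabla g^\ell(x),
$$
using linearity of the classical gradient. Because $x\in P_f^j$, the original representation of $f$ gives $\widetilde{\nabla}f(x)=\nabla f^j(x)$. Likewise $x\in P_g^\ell$ gives $\widetilde{\nabla}g(x)=\nabla g^\ell(x)$. Combining these yields the claimed identity at every $x$.

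There is no real obstacle; the only point requiring care is bookkeeping. Specifically, one must note that removing empty intersections does not affect the sums, because their indicators vanish identically. One should also note that the identity is genuinely pointwise and representation-dependent. It holds precisely because the refined representation uses the components $\beta f^j+g^\ell$ inherited from the original ones, rather than arbitrary components.
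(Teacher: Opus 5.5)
Your proposal is correct and follows essentially the same route as the paper: form the common refinement $P_f^j\cap P_g^{\ell}$, take the components $\beta f^j+g^{\ell}$, and verify both the representation identity and the gradient identity pointwise using the unique pair $(j,\ell)$ with $x\in P_f^j\cap P_g^{\ell}$. Your extra remarks on the continuity of $\beta f+g$ and on obtaining $\beta f$ from the trivial representation of $g=0$ are harmless details that the paper leaves implicit.
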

\begin{remark}
After discarding empty sets, the family
$$
\mathcal{P}
\coloneq
\left\{
P_f^j\cap P_g^{\ell}:
j\in\{1,\ldots,J_f\},\
\ell\in\{1,\ldots,J_g\}
\right\} \setminus \{\varnothing\}
$$
is a finite Borel partition of $\Rm$, called the
\emph{common refinement} of the two original partitions. For every
nonempty part
$
P^{j,\ell}\coloneq P_f^j\cap P_g^{\ell},
$
define
$
h^{j,\ell}\coloneq \beta f^j+g^{\ell}.
$
Then the pairs $(P^{j,\ell},h^{j,\ell})$ define a representation of
$\beta f+g$.
\end{remark}
\begin{proof}[Proof of Lemma~\myref{lem:vectorspace}]
Since the sets $P_f^j$ and $P_g^{\ell}$ are Borel, every intersection
$P_f^j\cap P_g^{\ell}$ is Borel. Moreover, because the two original
families are partitions of $\Rm$, the nonempty intersections are
pairwise disjoint and satisfy
$$
\Rm
=
\bigsqcup_{\substack{1\leq j\leq J_f\\1\leq\ell\leq J_g}}
\left(P_f^j\cap P_g^{\ell}\right).
$$
They therefore form a finite Borel partition of $\Rm$. For $x\in P^{j,\ell}$, the original representations give
$$
f(x)=f^j(x)
\quad\text{and}\quad
g(x)=g^{\ell}(x).
$$
Hence
$$
(\beta f+g)(x)
=
\beta f^j(x)+g^{\ell}(x)
=
h^{j,\ell}(x).
$$
Each $h^{j,\ell}$ is $C^p$, so this yields a piecewise-$C^p$
representation of $\beta f+g$.

Finally, for every $x\in\Rm$, let $j$ and $\ell$ be the unique indices
such that $x\in P_f^j\cap P_g^{\ell}$. By the definition of the
piecewise gradients,
$$
\begin{aligned}
\widetilde{\nabla}(\beta f+g)(x)
&=
\nabla h^{j,\ell}(x) =
\beta\,\nabla f^j(x)+\nabla g^{\ell}(x) =
\beta\,\widetilde{\nabla}f(x)
+\widetilde{\nabla}g(x).
\end{aligned}
$$
The result follows.
\end{proof}
\begin{remark}
    The class of piecewise-$C^p$ functions is closed under composition, i.e., if $f\colon \Rq \to \R $ and $ g\colon \Rm \to \Rq$ are piecewise-$C^p$, then $f \circ g$ is piecewise-$C^p$; see \cite[Section~4.1]{Scholtes2012piecewise}.
\end{remark} 

\subsection{Conservative set-valued fields}

A \emph{set-valued map} or \emph{multifunction} $D\colon \Rm \rightrightarrows \Rq$ is a mapping from $\Rm$ to the set of all subsets of $\Rq$. The \emph{graph} of $D$ is given by $\graph D\coloneq\{(x, y) \in \Rm \times \Rq : y \in D(x)\}$. $D$ is \emph{locally bounded} at $x \in \Rm$ if there exist a neighborhood $\Ncal$ of $x$ and $r > 0$ such that $\displaystyle \bigcup_{z \in \Ncal} D(z) \subset B_c(0, r)\coloneq\{y\in \Rq: \|y\|\leq r\}$. $D$ is \emph{graph-closed} if $\graph D$ is a closed subset of $\Rm \times \Rq$. Equivalently, $D$ is graph-closed if for all $(x_{\ell})_{\ell \in \N} \subset \Rm$ and all $(y_{\ell})_{\ell \in \N} \subset \Rq$ such that $x_{\ell} \xrightarrow[\ell \to \infty]{} x$, $y_{\ell} \xrightarrow[\ell \to \infty]{} y$ and $y_{\ell} \in D(x_{\ell})$ for every $\ell \in \N$, it follows that $y \in D(x)$.

\begin{definition}[Clarke subdifferential \cite{clarke1983nonsmooth}]\label{def:clarkesubdiff} Let $\varphi\colon \Rm \to \R$ be a locally Lipschitz function. By Rademacher's theorem, $\varphi$ is differentiable on a full-measure subset of $\Rm$, say $\Omega_{\varphi}$. Then, the \emph{Clarke subdifferential} of $\varphi$ is the set-valued map $\partial^{c} \varphi\colon \Rm \rightrightarrows \Rm$ defined as
\begin{align*}
    x \mapsto \conv\,\{u \in \Rm: \exists (x_{\ell})_{\ell \in \N} \subset \Omega_{\varphi}, x_{\ell} \xrightarrow[\ell \to \infty]{} x \text{ and }  \nabla \varphi(x_{\ell}) \xrightarrow[\ell \to \infty]{} u \}.
\end{align*}
 Here, $\conv S$ denotes the convex hull of the set $S$. The Clarke subdifferential of $\varphi$ has nonempty, convex and compact values, and is graph-closed and locally bounded. 
\end{definition}

Our analysis revolves around the notion of conservative fields introduced in the next definition. 
\begin{definition}[Conservative fields \cite{bolte2021conservative}]\label{def:conservative} Let $\varphi\colon \Rm \to \R$ be a locally Lipschitz function, and let $D_{\varphi}\colon \Rm \rightrightarrows \Rm$ be a locally bounded and graph-closed set-valued map with nonempty values. We say that $D_{\varphi}$ is a \emph{conservative field} for $\varphi$ if for any Lipschitz\footnote{In the original literature \cite{bolte2021conservative}, this definition is stated using absolutely continuous curves. This formulation is equivalent, since, without altering its role in the definition, every absolutely continuous curve can be arc-length reparameterized as a Lipschitz continuous curve \cite[Lemma 1.1.4]{AmbrosioGigliSavare2008}.} curve $v\colon[0, 1] \to \Rm$,
    \begin{equation*}
        \frac{d}{dt}\varphi(v(t)) = \scalarp{\dot{v}(t), u}  \quad \forall u \in D_{\varphi}(v(t)), 
    \end{equation*}
for almost all $t \in [0, 1]$. Here, $\dot{v}(t)$ denotes $\frac{d}{dt} v(t)$. 

A locally Lipschitz function admitting such a conservative field is called \emph{path differentiable}.
\end{definition}
We conclude this section by introducing a set-valued map $D_f\colon \Rm \rightrightarrows \Rm$ that we will show to be a conservative field for $f$ in Theorem~\myref{theo:main}. 

Let $f\colon \Rm \to \R$ be continuous and piecewise-$C^{p}$, and let $\{P^{j}\}_{j=1}^{{J}}$ and $\{f^{j}\}_{j=1}^{{J}}$ denote a Borel partition and a collection of $C^{p}$ functions defining a representation of $f$ as in Definition~\myref{def:piecewisec1}. 

Define
\begin{align}\label{eq:defconsvfield}
D_f(x) 
&\coloneq 
\big\{ \nabla f^{j}(x): x \in  \cl P^{j}, j \in \{1, \ldots, {J}\}\big\},
\end{align}
where $\cl P^{j}$ denotes the closure of $P^{j}$.
\begin{remark}\label{rmk:selection}
    The piecewise gradient $\widetilde{\nabla} f$ of $f$ is a selection of $D_f$, i.e.,
    \begin{equation*}
        \widetilde{\nabla} f(x) \in  D_f(x) \quad \forall x \in \Rm.
    \end{equation*}
    $D_f$ is not unique because it depends on the chosen representation of $f$.
\end{remark}
\begin{remark}\label{rmk:partitionexample}
The Clarke subdifferential can be strictly contained in the convex hull $\conv D_f$\footnote{$\conv D_f\colon \Rm \rightrightarrows \Rm$ is the set-valued map defined as $x \mapsto \conv D_f(x)$.} of $D_f$. For example, in one dimension, let $f = |\cdot|$ be the absolute value function $|\cdot|\colon \R \to [0, \infty)$ defined by $x \mapsto x$ for $x \geq 0$ and $x \mapsto -x$ otherwise. $\partial^{c}f(0) = [-1, 1]$. Using the partition $\{(-\infty,0), \{0\}, (0, \infty)\}$ and the functions $\{-x, 2x, x\}$, we get $D_f(0) = \{-1, 2, 1\}$ and its convex hull $\conv D_f(0) = [-1, 2]$. This example also shows that piecewise gradients are not necessarily selections of the Clarke subdifferential: $\widetilde{\nabla}f(0) = 2 \notin [-1,1] = \partial^{c}f(0)$.
\end{remark}

\subsection{Technical results}

We state a result of Stampacchia, which is a key tool in the proofs of Theorem~\myref{theo:murat} and the main result in the next section.

\begin{lemma}[Stampacchia~{\cite[Lemma A.4]{stampacchia2000repub}}]\label{lem:stampacchia}
Let $s,N\in\N\setminus\{0\}$, let $\Omega\subset\R^N$ be a bounded open set, and let $v\in W^{1,s}(\Omega)$\footnote{That is, $v\colon \Omega \to \R$ has weak (distributional) first-order partial derivatives $\frac{\partial v}{\partial x_i}$, with $|v|^s$ and $\left|\frac{\partial v}{\partial x_i}\right|^s$ Lebesgue integrable for each $i\in\{1,\ldots,N\}$.}. Then,
$$
\frac{\partial v}{\partial x_i}(x)=0
\quad\text{a.e. }\quad x\in\{z\in\Omega:v(z)=0\},
\qquad i=1,\ldots,N.
$$
In particular, the same conclusion holds for every absolutely continuous $v \colon [0,1]\to\R$.
\end{lemma}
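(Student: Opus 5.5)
The plan is to derive the statement from the identity $\nabla v^{+}=\indicator_{\{v>0\}}\nabla v$ for the positive part $v^{+}\coloneq\max(v,0)$ (and its analogue for $v^{-}$), and to give a separate elementary argument for the one-dimensional absolutely continuous case. Granting the identity, write $v=v^{+}-v^{-}$ with $v^{-}\coloneq\max(-v,0)=(-v)^{+}$. Applying the identity to $v$ and to $-v$ gives, a.e. in $\Omega$,
$$
\frac{\partial v}{\partial x_i}=\indicator_{\{v>0\}}\frac{\partial v}{\partial x_i}+\indicator_{\{v<0\}}\frac{\partial v}{\partial x_i}.
$$
Multiplying by $\indicator_{\{v=0\}}$ then yields $\frac{\partial v}{\partial x_i}=0$ a.e. on $\{v=0\}$.

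To prove the identity, I will use the regularized truncations
$$
G_\varepsilon(t)\coloneq\sqrt{t^2+\varepsilon^2}-\varepsilon \quad (t>0), \qquad G_\varepsilon(t)\coloneq 0 \quad (t\le 0).
$$
Each $G_\varepsilon$ is $C^1$ with $G_\varepsilon(0)=0$, $|G_\varepsilon'|\le 1$, and $G_\varepsilon'\to\indicator_{(0,\infty)}$ pointwise.

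First, I will establish the chain rule $\partial_i(G\circ v)=G'(v)\,\partial_i v$ for every $C^1$ function $G$ with bounded derivative and $G(0)=0$. I will approximate $v$ in $W^{1,s}(\Omega)$ by smooth functions $v_k$ (Meyers--Serrin) and, after passing to a subsequence, assume $v_k\to v$ and $\nabla v_k\to\nabla v$ a.e. Then $G(v_k)\to G(v)$ in $L^s$ because $G$ is Lipschitz. Moreover, $G'(v_k)\partial_i v_k\to G'(v)\partial_i v$ in $L^1_{\mathrm{loc}}$, by the split
$$
G'(v_k)(\partial_i v_k-\partial_i v)+\bigl(G'(v_k)-G'(v)\bigr)\partial_i v,
$$
where the first term tends to zero by the bound $|G'|\le 1$ and the second by continuity of $G'$ together with dominated convergence. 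This lets me pass to the limit in the weak formulation against test functions $\phi\in C_c^\infty(\Omega)$.

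Second, I will let $\varepsilon\to0$ in
$$
\int_\Omega G_\varepsilon(v)\,\partial_i\phi=-\int_\Omega G_\varepsilon'(v)\,\partial_i v\,\phi .
$$
Since $G_\varepsilon(v)\to v^{+}$ with $|G_\varepsilon(v)|\le|v|$, and $G_\varepsilon'(v)\to\indicator_{\{v>0\}}$ with $|G_\varepsilon'(v)|\le 1$, dominated convergence gives $\partial_i v^{+}=\indicator_{\{v>0\}}\partial_i v$ weakly. This proves the identity.

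I expect the main obstacle to be the chain-rule step, specifically the a.e.-convergence and domination bookkeeping needed to pass to the limit with only $v\in W^{1,s}$ on a bounded open set. Boundedness of $\Omega$ ensures $L^s\subset L^1$, so the products above are integrable.

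For the final claim about absolutely continuous $v\colon[0,1]\to\R$, I will argue directly. Such a $v$ is differentiable a.e., and its classical derivative is its weak derivative. The isolated points of $Z\coloneq\{v=0\}$ form a countable set, hence a null set. At any non-isolated point $t\in Z$ where $v$ is differentiable, choose $t_k\in Z\setminus\{t\}$ with $t_k\to t$. Then
$$
v'(t)=\lim_{k\to\infty}\frac{v(t_k)-v(t)}{t_k-t}=0 .
$$
Therefore $v'=0$ a.e. on $Z$. Alternatively, this case follows from the general statement, since an absolutely continuous function on $[0,1]$ lies in $W^{1,1}((0,1))$.
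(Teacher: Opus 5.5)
Your proof is correct. The paper does not actually prove this lemma: it imports it from Kinderlehrer--Stampacchia, and it leaves the final clause implicit (an absolutely continuous function on $[0,1]$ lies in $W^{1,1}((0,1))$, and its weak derivative is its a.e. classical derivative).

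Your general argument is the standard truncation proof, essentially Gilbarg--Trudinger, Lemmas~7.5--7.7. It runs through the chain rule via Meyers--Serrin approximation, the regularized cut-offs $G_\varepsilon$, the identity $\nabla v^{+}=\indicator_{\{v>0\}}\nabla v$, and the decomposition $v=v^{+}-v^{-}$. The convergence bookkeeping is sound. There are only cosmetic slips:
\begin{itemize}
\item For a general $G$ the bound should be $|G'|\le M$ rather than $|G'|\le 1$.
\item The convergence of $G'(v_k)\partial_i v_k$ holds in $L^1(\Omega)$, not merely $L^1_{\mathrm{loc}}$, because $\Omega$ is bounded.
\item The condition $G(0)=0$ is superfluous on a bounded domain.
\end{itemize}

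Your direct one-dimensional argument is a genuinely different and more elementary route. It uses only that the isolated points of $Z$ are countable and that difference quotients taken along zeros vanish. It needs only a.e. differentiability, with no Sobolev theory. It already covers every use of the lemma in the paper, namely Theorem~\ref{theo:murat}, Theorem~\ref{theo:main} and Example~\ref{ex:conservative-not-norkin}. In each of these the lemma is applied to Lipschitz functions of one real variable.

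What the truncation argument buys in exchange is the full $W^{1,s}(\Omega)$ statement. For $N\ge 2$ and $s\le N$, Sobolev functions can fail to be classically differentiable a.e., so the pointwise difference-quotient argument is unavailable there.
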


We now present a straightforward adaptation of the Murat--Trombetti chain rule result from globally Lipschitz functions to locally Lipschitz ones.

\begin{theorem}[Murat--Trombetti \cite{Murat2003chainrule} adapted]\label{theo:murat}
Let $f\colon \Rm \to \R$ be continuous and piecewise-$C^{p}$, and let $\{P^{j}\}_{j=1}^{{J}}$ and $\{f^{j}\}_{j=1}^{{J}}$ denote a Borel partition and a collection of $C^{p}$ functions defining a representation of $f$ as in Definition~\myref{def:piecewisec1}. Let $v\colon [0,1] \to \Rm$ be a Lipschitz continuous curve. Then, for almost all $t \in [0,1]$, the following chain rule holds:
\begin{equation*}
    \frac{d}{dt} f(v(t)) = \langle \dot{v}(t), \widetilde{\nabla} f(v(t)) \rangle.
\end{equation*}    
\end{theorem}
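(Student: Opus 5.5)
The chain rule will come from Stampacchia's lemma (Lemma~\myref{lem:stampacchia}) applied to the one-dimensional absolutely continuous functions $g_j(t)\coloneq f(v(t))-f^{j}(v(t))$, $j\in\{1,\ldots,J\}$, together with a density-point argument on the sets $T_j\coloneq\{t\in[0,1]: v(t)\in P^j\}$. Since $f$ is locally Lipschitz (by the Fact above) and $v$ is Lipschitz, the composition $f\circ v$ is Lipschitz on $[0,1]$, hence differentiable a.e. The same holds for each $f^j\circ v$, and there $\frac{d}{dt}f^j(v(t))=\langle \dot v(t),\nabla f^j(v(t))\rangle$ wherever $v$ is differentiable, by the classical chain rule for $C^1$ functions. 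Each $T_j=v^{-1}(P^j)$ is Borel, because $v$ is continuous and $P^j$ is Borel. The sets $T_1,\ldots,T_J$ form a partition of $[0,1]$.

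The key step is the following. The function $g_j$ is Lipschitz, hence absolutely continuous, and it vanishes on $T_j$, since $f=f^j$ on $P^j$. Stampacchia's lemma then gives $g_j'(t)=0$ for a.e.\ $t\in\{g_j=0\}\supset T_j$. If one prefers to avoid the lemma, the same conclusion follows elementarily: at a.e.\ point of $T_j$ (a Lebesgue density point, hence not an isolated point of $T_j$), where $g_j$ is differentiable, take a sequence $t_k\to t$ in $T_j$. Then $g_j(t_k)=g_j(t)=0$, so $g_j'(t)=0$.

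Now let $N$ be the union of the following null sets: the set where $v$, $f\circ v$, or any $f^j\circ v$ fails to be differentiable, and the exceptional null sets $E_j\subset T_j$ from the key step. For $t\notin N$ there is a unique $j$ with $t\in T_j$, and for that $j$,
\[
\frac{d}{dt}f(v(t))=\frac{d}{dt}f^j(v(t))=\langle \dot v(t),\nabla f^j(v(t))\rangle=\langle \dot v(t),\widetilde\nabla f(v(t))\rangle,
\]
because $v(t)\in P^j$ and therefore $\widetilde\nabla f(v(t))=\nabla f^j(v(t))$.

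I expect the only genuine subtlety to be making sure Stampacchia's lemma is applied to an absolutely continuous function on $[0,1]$, which is why we pass to $g_j$ along the curve rather than working in $\Rm$. The remaining point is that finitely many null exceptional sets suffice, which holds since $J$ is finite. The local-versus-global Lipschitz issue is harmless: $v([0,1])$ is compact, so only Lipschitz behaviour of $f$ and the $f^j$ on a bounded neighbourhood of the curve is used.
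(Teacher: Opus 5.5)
Your proof is correct and follows essentially the same route as the paper: Stampacchia's lemma applied to $f\circ v - f^j\circ v$, which vanishes on $v^{-1}(P^j)$, then collecting finitely many null exceptional sets. The paper writes the final step as a sum of indicator functions, which is equivalent to your unique-index argument, and your elementary substitute for Stampacchia's lemma is also valid, since almost every point of $T_j$ is an accumulation point of $T_j$.
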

\begin{proof} The proof follows the arguments in \cite{Murat2003chainrule,despres2025associated,despres2026limit}. It is decomposed into three steps.

\noindent\emph{First step.} By the piecewise-$C^{p}$ decomposition of $f$, the classical chain rule gives, for each $j \in \{1, \ldots, J\}$,
\begin{equation}\label{eq:standardchainrule}
 \frac{d}{dt} f^{j}(v(t))
=
\langle \dot{v}(t), {\nabla} f^{j}(v(t)) \rangle \quad \text{a.e.}\quad t \in [0,1].
\end{equation}
Let
$$
U^{j}
=
\{s\in[0,1]:v(s)\in P^{j}\}.
$$
Since
$
U^{j}=v^{-1}(P^{j})
$
and $P^{j}$ is a Borel set while $v$ is continuous,
$U^{j}$ is also Borel measurable.

\noindent\emph{Second step.}
Observe that
\begin{equation}\label{eq:equalityonpartitionfonction}
f(v(t))
=
f^{j}(v(t))
\quad \forall t \in U^{j}.
\end{equation}
Define
$
w
\coloneq
f\circ v - f^{j}\circ v.
$
Thus, $w$ is absolutely continuous. Applying Stampacchia's lemma yields
\begin{equation}\label{eq:wstampacchia}
\frac{d}{dt} w(t)=0
\quad
\text{a.e.} \quad t \in \{s \in [0,1] : w(s)=0 \}.
\end{equation}
By~\eqref{eq:equalityonpartitionfonction}, we have
$
U^{j}
\subset
\{s \in [0,1] : w(s)=0 \}.
$
So, from~\eqref{eq:wstampacchia} and~\eqref{eq:standardchainrule}, it follows that
\begin{equation}\label{eq:chainruleonpartition}
\frac{d}{dt} f(v(t))
=
\frac{d}{dt} f^{j}(v(t))
=
\langle \dot{v}(t), {\nabla} f^{j}(v(t)) \rangle
\quad
\text{a.e.} \quad t \in U^{j}.
\end{equation}

Moreover,
\begin{equation}\label{eq:indicatorequality}
\indicator_{U^{j}}(t)
=
\indicator_{P^{j}}(v(t))
\quad
\forall t \in [0,1].
\end{equation}

\noindent\emph{Third step.}
Consider
$
A(t)
\coloneq
\frac{d}{dt} f(v(t))
-
\langle \dot{v}(t), \widetilde{\nabla} f(v(t)) \rangle.
$
Then for almost all $t \in [0,1]$,
$$
\begin{aligned}
A(t)
&=
\frac{d}{dt} f(v(t))
-
\left\langle 
\dot{v}(t),
\sum_{j=1}^{{J}}
\indicator_{P^{j}}(v(t))
\nabla f^{j}(v(t))
\right\rangle
\\
&=
\left(
\sum_{j=1}^{{J}}
\indicator_{P^{j}}(v(t))
\right)
\frac{d}{dt} f(v(t))
-
\left\langle
\dot{v}(t),
\sum_{j=1}^{{J}}
\indicator_{P^{j}}(v(t))
\nabla f^{j}(v(t))
\right\rangle
\\
&=
\sum_{j=1}^{{J}}
\indicator_{P^{j}}(v(t))
\left(
\frac{d}{dt} f(v(t))
-
\left\langle \dot{v}(t),
\nabla f^{j}(v(t))
\right\rangle
\right).
\end{aligned}
$$
Using~\eqref{eq:indicatorequality}, we get, for almost all $t\in[0,1]$,
\begin{align*}
    A(t)
    =
\sum_{j=1}^{{J}}
\indicator_{U^{j}}(t)
\left(
\frac{d}{dt} f(v(t))
-
\left\langle \dot{v}(t), \nabla f^{j}(v(t)) \right\rangle
\right)
= 0,
\end{align*}
where the last equality comes from~\eqref{eq:chainruleonpartition}. 
\end{proof}

Next, we derive the following generalized Morse--Sard lemma from \cite[Theorem~5]{barbet2013morse}. It will be used in the algorithmic analysis of Section~\myref{sect:application}.

\begin{lemma}[Generalized Morse--Sard]\label{lem:morsesard}
Let $n \in \N \setminus \{0\}$. Let $f_i\colon\Rm\to\R$, $i\in\{1,\ldots,n\}$, be continuous and piecewise-$C^p$. For each $i$, let $\{P_i^j\}_{j=1}^{J_i}$ and $\{f_i^j\}_{j=1}^{J_i}$ define a representation
of $f_i$ as in Definition~\myref{def:piecewisec1}, and let $D_{f_i}$ be the set-valued map associated with this representation by \eqref{eq:defconsvfield}. Assume that $p\geq m$, and set
$$
f\coloneq\frac{1}{n}\sum_{i=1}^n f_i.
$$
Let $D_f$ be the set-valued map defined by \eqref{eq:defconsvfield} from
the common-refinement representation of $f$ induced by the representations of the functions $f_i$. The set of conservative critical values of $f$ has measure zero:
\begin{align*}
\lambda^1\!\left(
f\big(\{x\in\Rm:0\in\conv D_f(x)\}\big)
\right)&=0.
\end{align*}
Moreover, the set of Clarke critical values of $f$ has measure zero:
$$
\lambda^1\!\left(
f\big(\{x\in\Rm:0\in\partial^c f(x)\}\big)
\right)=0.
$$
Here, $\lambda^1$ denotes the Lebesgue measure on $\R$. In general,
$\lambda^m$ denotes the Lebesgue measure on $\Rm$.
\end{lemma}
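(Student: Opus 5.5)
Since the statement is announced as a consequence of \cite[Theorem~5]{barbet2013morse}, I will reduce it to that result. Barbet--Dambrine--Daniilidis--Rifford show that for a finite family of $C^{k}$ functions $g^1,\ldots,g^N\colon\Rm\to\R$ with $k\geq m$ (the Morse--Sard smoothness threshold for real-valued maps), the set of values of the form $g^{j}(x)$ with $0\in\conv\{\nabla g^{\ell}(x):\ell\in I\}$, for some index set $I\ni j$ with $g^\ell(x)=g^j(x)$ for all $\ell\in I$, is Lebesgue-null. I will phrase the argument with exactly this ``generalized critical value'' statement as the black box.

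\textbf{Step 1: the common refinement.} Let $\{P^{k}\}_{k=1}^{K}$ denote the common refinement of the partitions $\{P_i^{j}\}$. By Lemma~\myref{lem:vectorspace}, iterated over $i$, it gives a representation of $f$ with components $h^{k}=\frac1n\sum_i f_i^{j_i(k)}$. Each $h^k$ is $C^p$ and $p\geq m$.

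\textbf{Step 2: active components share the value $f(x)$.} If $x\in\cl P^{k}$, pick $x_\ell\in P^k$ with $x_\ell\to x$. Then $f(x_\ell)=h^k(x_\ell)$, and continuity of $f$ and $h^k$ gives $f(x)=h^k(x)$. Hence, setting $I(x)=\{k:x\in\cl P^k\}$, we have $D_f(x)=\{\nabla h^k(x):k\in I(x)\}$ with $h^k(x)=f(x)$ for all $k\in I(x)$.

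\textbf{Step 3: reduction to Barbet et al.} If $0\in\conv D_f(x)$, then $f(x)=h^k(x)$ for any $k\in I(x)$, and $0$ lies in the convex hull of the gradients of a subfamily of equal-valued components. So $f(x)$ is a generalized critical value of the finite $C^p$ family $\{h^k\}$. Theorem~5 of \cite{barbet2013morse} then yields the first claim.

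If their theorem is instead phrased via $\min/\max$ selections, I will use a fallback: for each nonempty $I\subset\{1,\ldots,K\}$, let $C_I$ be the set of $x$ where all $h^k$, $k\in I$, coincide and $0\in\conv\{\nabla h^k(x)\}_{k\in I}$. I then apply their result to each $C_I$ and take the finite union.

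\textbf{Step 4: Clarke critical values.} It suffices to show $\partial^c f(x)\subset\conv D_f(x)$. Let $x_\ell\to x$ be points of differentiability of $f$ with $\nabla f(x_\ell)\to u$. By Theorem~\myref{theo:murat} applied along segments, together with a Lebesgue density argument, at almost every such point $\nabla f$ equals $\nabla h^{k}$ for some $k$ whose part $P^k$ contains that point. So I may choose $x_\ell\in P^{k_\ell}$ with $\nabla f(x_\ell)=\nabla h^{k_\ell}(x_\ell)$, since Clarke's definition is insensitive to null sets. Passing to a subsequence with constant $k_\ell=k$, we get $x\in\cl P^k$ and $u=\nabla h^k(x)\in D_f(x)$. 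Taking convex hulls, the Clarke critical set is contained in the conservative critical set, so the second claim follows from the first.

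\textbf{Main obstacle.} The hardest point is the a.e.\ identification $\nabla f=\nabla h^k$ on $P^k$ in Step~4. I would handle it by applying Theorem~\myref{theo:murat} on lines parallel to each coordinate axis and invoking Fubini. This gives, for a.e.\ $x\in P^k$, $\partial_i f(x)=\partial_i h^k(x)$ for every $i$, which coincides with the gradient wherever $f$ is differentiable. A secondary concern is matching the exact hypotheses of \cite[Theorem~5]{barbet2013morse}.
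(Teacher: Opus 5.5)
Your proof is correct. The first claim follows the paper's argument, and the Clarke claim takes a genuinely different route.

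For the conservative critical values you match the paper: common refinement, the fact that closure-active components satisfy $h^k(x)=f(x)$, then an appeal to the proof of \cite[Theorem~5]{barbet2013morse}. You do not need the fallback over the sets $C_I$. Since $f$ is itself a continuous selection of $\{h^k\}$, your Step~2 already gives $\conv D_f(x)\subset\conv\{\nabla h^k(x): h^k(x)=f(x)\}$. The right-hand side is the active-gradient hull of the selection $f$, which is the object Barbet et al.\ control, and this is how the paper phrases it.

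For the Clarke critical values, the paper cites \cite[Proposition~4]{barbet2013morse} for $\partial^c f(x)\subset\conv\{\nabla h^k(x):h^k(x)=f(x)\}$. It then reuses the same Sard bound on this larger active-gradient hull. You instead prove the sharper inclusion $\partial^c f(x)\subset\conv D_f(x)$ directly, so the Clarke statement becomes a corollary of the conservative one. You do this in two steps:
\begin{itemize}
\item You establish the a.e.\ identity $\nabla f=\widetilde\nabla f$ via Theorem~\myref{theo:murat} on coordinate lines plus Fubini. This is sound because, $f$ being continuous and $\widetilde\nabla f$ Borel, the set where $\partial_i f$ exists and equals $(\widetilde\nabla f)_i$ is Borel. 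No density argument is needed.
\item You use Clarke's theorem that the generating limits of $\partial^c f(x)$ may be taken along sequences avoiding any prescribed null set. This, rather than re-choosing a given sequence, is the precise justification for your ``I may choose $x_\ell\in P^{k_\ell}$''.
\end{itemize}

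Your route is more self-contained: it needs only the Murat--Trombetti chain rule already in the paper, not a second external result. It also yields $\partial^c f\subset\conv D_f$ without waiting for Theorem~\myref{theo:main} and the Bolte--Pauwels inclusion. The paper's route is shorter and works only with the value-active structure of the selection $f$, so both claims are handled by a single Sard statement for $\{x:0\in\conv\{\nabla h^k(x):h^k(x)=f(x)\}\}$.
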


\begin{proof}
Let
$ \displaystyle
\mathcal{J}\coloneq\prod_{i=1}^n\{1,\ldots,J_i\}.
$
Define
$\displaystyle
P^{\mathbf{j}}\coloneq\bigcap_{i=1}^nP_i^{j_i},
g^{\mathbf{j}}\coloneq\frac{1}{n}\sum_{i=1}^nf_i^{j_i}
$
for $\mathbf{j}=(j_1,\ldots,j_n)\in\mathcal{J}$.
After discarding the empty sets, the sets $P^{\mathbf{j}}$ form the common
refinement. Consequently,
$$
D_f(x)
=
\left\{
\nabla g^{\mathbf{j}}(x):
x\in \cl P^{\mathbf{j}},\ 
\mathbf{j}=(j_1,\ldots,j_n)\in\mathcal{J}
\right\}.
$$
The function $f$ is a continuous selection of the finite family of $C^p$
functions $\{g^{\mathbf{j}}\}_{\mathbf{j}\in\mathcal{J}}$. Define its active
index set and the corresponding active-gradient set by
$$
I(x)\coloneq
\{\mathbf{j}\in\mathcal{J}:g^{\mathbf{j}}(x)=f(x)\},
\quad
A_f(x)\coloneq
\conv\,\{\nabla g^{\mathbf{j}}(x):\mathbf{j}\in I(x)\}.
$$
If $x\in\cl \bigcap_{i=1}^n P_i^{j_i}$, then continuity and the equality
$f_i=f_i^{j_i}$ on $P_i^{j_i}$ imply
$f_i(x)=f_i^{j_i}(x)$ for every $i$. Hence
$g^{\mathbf{j}}(x)=f(x)$, and
$
 D_f(x)
\subset
 \{\nabla g^{\mathbf{j}}(x):\mathbf{j}\in I(x)\}.
$
The proof of \cite[Theorem 5]{barbet2013morse} shows that
$
\lambda^1\!\left(
f\big(\{x\in\Rm:0\in A_f(x)\}\big)
\right)=0.
$
The assertion concerning $D_f$ follows
from the preceding inclusion. Finally,
$\partial^c f(x) \subset A_f(x)$ by
\cite[Proposition 4]{barbet2013morse}, which proves the assertion concerning
the Clarke critical values.
\end{proof}

We conclude this section with a result that depends on how the parts fit together. It is used only in Theorem~\myref{theo:clarkcrit} of Section~\myref{sect:application}.
\begin{lemma}\label{lem:piecewiseCplocallyC2ae}
Let $p\in \N \cup \{\infty\}$. Let $f\colon \Rm \to \R$ be continuous and piecewise-$C^{p}$, and let $\{P^{j}\}_{j=1}^{{J}}$ and $\{f^{j}\}_{j=1}^{{J}}$ denote a Borel partition and a collection of $C^{p}$ functions defining a representation of $f$ as in Definition~\myref{def:piecewisec1}. Define the interface by
$$
\mathcal{I}
\coloneq
\bigcup_{\substack{j,\ell\in\{1,\ldots,J\}\\j\neq \ell}}
\left(\cl{P^j}\cap\cl{P^{\ell}}\right).
$$
If $\mathcal{I}$ has measure zero, then $f$ is locally $C^p$ almost everywhere.
\end{lemma}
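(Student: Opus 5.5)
The plan is to show that $f$ is $C^p$ on a neighborhood of every point outside the null set $\mathcal{I}$, i.e., on the open set $\Rm\setminus\mathcal{I}$ up to checking that this set has full measure. Since $\mathcal{I}$ is a finite union of closed sets $\cl P^j\cap \cl P^{\ell}$, it is closed, so $\Rm\setminus\mathcal{I}$ is open. By hypothesis $\lambda^m(\mathcal{I})=0$, so $\Rm\setminus\mathcal{I}$ has full measure. It therefore suffices to prove that every $x\notin\mathcal{I}$ admits an open neighborhood on which $f$ coincides with a single component $f^j$.

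Fix $x\in\Rm\setminus\mathcal{I}$. Because the parts cover $\Rm$, we have $x\in P^{j}\subset\cl P^{j}$ for a unique $j$. For every $\ell\neq j$, $x\notin\cl P^{\ell}$: otherwise $x\in\cl P^j\cap\cl P^{\ell}\subset\mathcal{I}$. Since each $\cl P^{\ell}$ is closed and there are finitely many indices, there is $r>0$ with
$$
B(x,r)\cap\bigcup_{\ell\neq j}\cl P^{\ell}=\varnothing .
$$
As the parts partition $\Rm$, this forces $B(x,r)\subset P^{j}$. Hence $f=f^{j}$ on the open ball $B(x,r)$ by \eqref{eq:representation}, and $f$ is $C^p$ there. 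This step uses only the finiteness of the partition; continuity of $f$ is not even needed.

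The only delicate point is the reading of ``locally $C^p$ almost everywhere''. It should mean that the set of points admitting a neighborhood on which $f$ is $C^p$ has full measure. That set contains $\Rm\setminus\mathcal{I}$, so the conclusion follows. I expect no real obstacle. The main thing to state carefully is that $\mathcal{I}$ is closed, which relies on $J$ being finite. This gives that the good set is open, so the local $C^p$ property holds genuinely on a neighborhood of each good point and not merely pointwise.
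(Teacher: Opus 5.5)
Your proof is correct and is essentially the paper's argument: both show that each $x\notin\mathcal{I}$ lies in the interior of its own part $P^j$, and then conclude that $f=f^j$ on a ball around $x$. The only cosmetic difference is how the ball is obtained. You get it directly from the closedness of the finite union $\bigcup_{\ell\neq j}\cl P^{\ell}$, whereas the paper argues by contradiction, taking a sequence $x_k\to x$ outside $P^j$ and extracting a single index $\ell$ by pigeonhole; both steps use exactly the finiteness of $J$.
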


\begin{proof}
Let
$
x\in\Rm\setminus\mathcal{I}.
$

Since $\{P^{j}\}_{j=1}^{J}$ is a partition of $\Rm$, there exists a unique
$i\in\{1,\ldots,J\}$ such that
$
x\in P^i.
$
We claim that
$
x\in\interior P^i.
$\footnote{The notation $\interior A$ denotes the interior of the set $A$.}

Suppose, for contradiction, that
$
x\notin\interior P^i.
$
Since $x\in P^i$, every neighborhood of $x$ intersects
$\Rm\setminus P^i$. Hence, for all $k\geq1$, there exists
$
x_k\in B(x,1/k)\setminus P^i,
$
where $B(x,1/k) \coloneq \{y \in \Rm: \|x - y\| < 1/k\}$. Because $\{P^j\}_{j=1}^{J}$ is a partition, there exists
$\ell_k\neq i$ such that
$
x_k\in P^{\ell_k}.
$
Since there are only finitely many parts in the partition, one may extract a subsequence,
still denoted by $(x_k)_{k\geq 1}$, and an index $\ell\neq i$ such that
$
x_k\in P^{\ell}
$
for all $k \geq 1$.
Passing to the limit gives
$
x\in\cl{P^{\ell}}.
$
Since
$
x\in P^i\subset\cl{P^i},
$
it follows that
$
x\in\cl{P^i}\cap\cl{P^{\ell}}
\subset\mathcal{I},
$
which contradicts
$
x\notin\mathcal{I}.
$
Therefore,
$
x\in\interior P^i.
$

Hence, there exists $r>0$ such that
$
B(x,r) \subset P^i.
$
Consequently,
$
f=f^i\text{ on }B(x,r).
$
Since $f^i$ is $C^p$,
$\displaystyle
f|_{B(x,r)}
$
is $C^p$.
Thus, $f$ is locally $C^p$ on
$
\Rm\setminus\mathcal{I}.
$
The conclusion follows.
\end{proof}

\section{Piecewise smooth functions are path differentiable}\label{sect:main}
In this section, we present our main result. We start with a lemma that shows that $D_f$ in \eqref{eq:defconsvfield} has the required basic properties.

\begin{lemma}\label{lem:conservativeproperties} Let $f\colon \Rm \to \R$ be continuous and piecewise-$C^{p}$, and let $\{P^{j}\}_{j=1}^{{J}}$ and $\{f^{j}\}_{j=1}^{{J}}$ denote a Borel partition and a collection of $C^{p}$ functions defining a representation of $f$ as in Definition~\myref{def:piecewisec1}.

The set-valued map $D_f$, as defined in \eqref{eq:defconsvfield}, has a closed graph, nonempty and compact values, and is locally bounded.
\end{lemma}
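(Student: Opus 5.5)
The plan is to verify the four properties directly from definition \eqref{eq:defconsvfield}, using only two facts: the partition is finite, and each $\nabla f^j$ is continuous (since $p\geq 1$). For $x\in\Rm$ write $\mathcal{A}(x)\coloneq\{j\in\{1,\ldots,J\}: x\in\cl P^j\}$, so that $D_f(x)=\{\nabla f^j(x): j\in\mathcal{A}(x)\}$.

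\emph{Nonempty and compact values.} Since $\{P^j\}_{j=1}^J$ partitions $\Rm$, every $x$ lies in some $P^j\subset\cl P^j$. Hence $\mathcal{A}(x)\neq\varnothing$, and $D_f(x)$ is nonempty. It is a set of at most $J$ points, so it is compact.

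\emph{Local boundedness.} Fix $x$ and take the closed ball $B_c(x,1)$. Each continuous $\nabla f^j$ is bounded on this compact ball, say by $r_j$. Then every $D_f(z)$ with $z\in B(x,1)$ is contained in $B_c(0,\max_j r_j)$.

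\emph{Closed graph.} This is the only step needing a small argument, via pigeonhole. Take $x_\ell\to x$ and $y_\ell\to y$ with $y_\ell\in D_f(x_\ell)$. Choose indices $j_\ell\in\mathcal{A}(x_\ell)$ with $y_\ell=\nabla f^{j_\ell}(x_\ell)$. Because there are finitely many indices, some $j$ satisfies $j_\ell=j$ along a subsequence. Along that subsequence:
\begin{itemize}
\item $x_\ell\in\cl P^j$, and $\cl P^j$ is closed, so $x\in\cl P^j$;
\item by continuity of $\nabla f^j$, $y=\lim \nabla f^j(x_\ell)=\nabla f^j(x)$.
\end{itemize}
Therefore $y\in D_f(x)$.

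I do not expect a genuine obstacle. The only point requiring care is the subsequence extraction, and it works precisely because the active index sets come from closures of the parts, which are closed sets. This is what makes the map graph-closed even though the parts themselves are merely Borel.
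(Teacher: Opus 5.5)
Your proposal is correct and follows essentially the same route as the paper's proof: nonemptiness from the partition property, compactness from finiteness, local boundedness from continuity of the finitely many $\nabla f^j$ on a compact ball, and graph-closedness by extracting a subsequence with a constant index $j$ and using that $\cl P^j$ is closed and $\nabla f^j$ is continuous.
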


\begin{proof} Because $f$ is piecewise-$C^{p}$, every $x \in \Rm$ lies in some part $P^j \subset \cl P^j$. So $\nabla f^j(x) \in D_f(x)$, and $D_f$ has nonempty values. Its values are finite sets, hence compact.

We then prove that $\graph D_f$ is closed. Let
$$
x_{\ell}\to x,\qquad u_{\ell}\to u,\qquad u_{\ell}\in D_f(x_{\ell}).
$$
For every $\ell$, there exists $j_{\ell}\in\{1,\ldots,{J}\}$ such that
$$
x_{\ell}\in \cl P^{j_{\ell}},
\qquad
u_{\ell}=\nabla f^{j_{\ell}}(x_{\ell}).
$$
Since the set of indices is finite, up to extracting a subsequence, we may assume that
$j_{\ell}=j$ for some fixed $j$. Hence
$$
x_{\ell}\in\cl P^{j}
\qquad\text{and}\qquad
u_{\ell}=\nabla f^{j}(x_{\ell}).
$$
Since $\cl P^{j}$ is closed and $x_{\ell}\to x$, we get 
$
x\in \cl P^{j}.
$
Moreover, by the continuity of $\nabla f^{j}$,
$$
u=\lim_{\ell\to\infty}u_{\ell}
=
\lim_{\ell\to\infty}\nabla f^{j}(x_{\ell})
=
\nabla f^{j}(x).
$$
Therefore, $u\in D_f(x)$, and so $\graph D_f$ is closed.

We now prove local boundedness. Fix $x_0\in \Rm$ and let $r>0$. Since each
$\nabla f^{j}$ is continuous, it is bounded on the compact set $B_c(x_0,r)\coloneq \{x\in \Rm: \|x_0 - x\|\leq r\}$.
Thus
$$
L(x_0, r) \coloneq
\max_{1\leq j \leq {J}}
\sup_{x\in B_c(x_0,r)}\|\nabla f^{j}(x)\|
< \infty.
$$
Let $x\in B_c(x_0,r)$ and $u \in D_f(x)$. Then for some $j \in\{1, \ldots, J\}$,
$
u=\nabla f^{j}(x),
$
and therefore
$
\|u\|\leq L(x_0, r).
$
Hence $D_f$ is locally bounded.
\end{proof}

\begin{theorem}[Main result]\label{theo:main} Let $f\colon \Rm \to \R$ be continuous and piecewise-$C^{p}$, and let $\{P^{j}\}_{j=1}^{{J}}$ and $\{f^{j}\}_{j=1}^{{J}}$ denote a Borel partition and a collection of $C^{p}$ functions defining a representation of $f$ as in Definition~\myref{def:piecewisec1}. Let $D_f$ be as defined in \eqref{eq:defconsvfield}. 

Then $D_f$ is a conservative field for $f$; that is, for any Lipschitz continuous curve $v\colon[0,1]\to\Rm$, for almost all $t \in [0,1]$,
    \begin{equation*}
    \frac{d}{dt} f(v(t)) = \langle \dot{v}(t), u \rangle \quad \forall u \in D_f(v(t)).
    \end{equation*}
In particular, $f$ is path differentiable, and $\conv D_f$ and $\partial^{c} f$ are conservative fields for $f$.
\end{theorem}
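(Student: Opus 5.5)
The plan is to show that, for every $j$, the chain rule with $\nabla f^j$ holds for almost every $t$ at which $v(t)\in\cl P^j$, and not only on $U^j$. Fix a Lipschitz curve $v$ and an index $j$. Set
$$
V^j\coloneq\{t\in[0,1]:v(t)\in\cl P^j\},
$$
which is a closed set because $v$ is continuous. On $V^j$ we have $f(v(t))=f^j(v(t))$. Indeed, if $x\in\cl P^j$, pick $x_k\in P^j$ with $x_k\to x$. Then $f(x_k)=f^j(x_k)$, and continuity of $f$ and $f^j$ gives $f(x)=f^j(x)$.

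Now define
$$
w^j\coloneq f\circ v-f^j\circ v .
$$
It is absolutely continuous, because $f$ is locally Lipschitz (by the Fact above), $f^j$ is $C^1$, and $v$ is Lipschitz with compact range. It also vanishes on $V^j$. Stampacchia's Lemma~\myref{lem:stampacchia} then gives $\dot w^j(t)=0$ for a.e. $t\in\{w^j=0\}\supset V^j$.

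Combining this with the classical chain rule \eqref{eq:standardchainrule} for $f^j\circ v$, we get a null set $N_j\subset[0,1]$ such that for all $t\in V^j\setminus N_j$ both derivatives exist and
$$
\frac{d}{dt}f(v(t))=\frac{d}{dt}f^j(v(t))=\langle\dot v(t),\nabla f^j(v(t))\rangle .
$$
Let $N\coloneq\bigcup_{j=1}^J N_j$ together with the null set where $f\circ v$ or $v$ fails to be differentiable; $N$ is null since $J$ is finite. For $t\notin N$ and any $u\in D_f(v(t))$, we have $u=\nabla f^j(v(t))$ for some $j$ with $v(t)\in\cl P^j$, that is, $t\in V^j$. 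Hence $\frac{d}{dt}f(v(t))=\langle\dot v(t),u\rangle$. Together with Lemma~\myref{lem:conservativeproperties} (closed graph, nonempty compact values, local boundedness), this proves that $D_f$ is a conservative field, so $f$ is path differentiable.

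The step I expect to need the most care is this replacement of $U^j=v^{-1}(P^j)$ by $V^j=v^{-1}(\cl P^j)$. The argument hinges on $f=f^j$ on $\cl P^j$, which is exactly where continuity of $f$ is used. The Murat--Trombetti argument (Theorem~\myref{theo:murat}) only handles one selected index per point, whereas here every closure-active index must work simultaneously. Finiteness of $J$ is what makes the union of exceptional sets null.

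For the final assertions there are two steps.
\begin{itemize}
\item $\conv D_f$ is conservative by linearity: the identity $\frac{d}{dt}f(v(t))=\langle\dot v(t),u\rangle$ is preserved under convex combinations of $u$. Graph-closedness and local boundedness of $\conv D_f$ follow from those of $D_f$ via Carathéodory's theorem and a compactness argument.
\item For $\partial^c f$, I would invoke \cite[Corollary~1]{bolte2021conservative}: if a path differentiable function admits a conservative field, then its Clarke subdifferential is also a conservative field. Alternatively, one can cite the known result that path differentiability is equivalent to $\partial^c f$ being conservative.
\end{itemize}
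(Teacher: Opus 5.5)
Your proof is correct, but it is organized differently from the paper's, and more directly.

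The paper argues in two stages. It first takes the Murat--Trombetti chain rule (Theorem~\ref{theo:murat}) for the selected gradient $\widetilde{\nabla} f$. It then applies Stampacchia's lemma to $f^i\circ v-f^j\circ v$ on each pairwise coincidence set $E_{ij}=\{s: f^i(v(s))=f^j(v(s))\}$. Continuity of $f$ is used only to see that $v(t)\in\cl P^i$ and $v(t)\in P^j$ force $t\in E_{ij}$, whence $\langle\dot v(t),\nabla f^i(v(t))\rangle=\langle\dot v(t),\widetilde{\nabla} f(v(t))\rangle=\frac{d}{dt}f(v(t))$.

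You instead apply Stampacchia once per index, directly to $f\circ v-f^j\circ v$ on the closed set $v^{-1}(\cl P^j)$, and this handles all closure-active gradients at once. Your version is shorter and avoids the pairwise bookkeeping. It also does not need Theorem~\ref{theo:murat} as an input; in fact it reproves it, since $v^{-1}(P^j)\subset v^{-1}(\cl P^j)$.

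The paper's version buys modularity. It keeps the piecewise gradient at the center and isolates a statement purely about the smooth components: two components agreeing at $v(t)$ have the same derivative along $\dot v(t)$ for a.e.\ $t$.

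In both proofs the closure is used only to guarantee $f^j(v(t))=f(v(t))$. So either argument, run on the full zero set $\{t: f^j(v(t))=f(v(t))\}$, also shows that the possibly larger active-index field $x\mapsto\{\nabla f^j(x): f^j(x)=f(x)\}$ is conservative.

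Your handling of $\conv D_f$ via Carath\'eodory and of $\partial^c f$ via Corollary~1 of \cite{bolte2021conservative} is fine. The paper simply cites Remark~3(e) and Corollary~1 there for both.
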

\begin{proof}

Let $v\colon [0,1] \to \Rm$ be Lipschitz continuous.

For every pair of indices $i,j\in\{1,\ldots,{J}\}$, define
$$
E_{ij}
\coloneq
\left\{
s\in[0,1]:
f^i(v(s))=f^j(v(s))
\right\}.
$$

By Stampacchia's result in Lemma~\myref{lem:stampacchia}, since
$f^i\circ v$ and $f^j\circ v$ are absolutely continuous and coincide on
$E_{ij}$, one has
$$
 \langle \dot v(s), \nabla f^i(v(s)) \rangle
=
 \langle \dot v(s), \nabla f^j(v(s)) \rangle
\quad
\text{ a.e.} \quad s\in E_{ij}.
$$

For all $i,j \in \{1,\ldots,{J}\}$, let $N_{ij} \subset E_{ij}$ with measure zero such that
\begin{equation*}
    E_{ij} \setminus N_{ij} \subset \{s \in E_{ij}: \langle \dot v(s), \nabla f^i(v(s)) \rangle
=
 \langle \dot v(s), \nabla f^j(v(s)) \rangle\}.
\end{equation*}
Define $\displaystyle \Omega_1 \coloneq [0,1] \setminus \bigcup_{1\leq i,j \leq {J}} N_{ij}$. $\Omega_1$ has full measure in $[0,1]$ and for every $s\in \Omega_1$ and every
$i,j\in\{1,\ldots,{J}\}$,

\begin{equation}\label{eq:equalityset}
s\in E_{ij}
\implies
\langle \dot v(s), \nabla f^i(v(s)) \rangle
=
\langle \dot v(s), \nabla f^j(v(s)) \rangle.
\end{equation}
Let $\Omega_2 \subset [0,1]$ be a full-measure set where the Murat--Trombetti chain rule in Theorem~\myref{theo:murat} holds for $f \circ v$. Set $\Omega = \Omega_1 \cap \Omega_2$. $\Omega$ has full measure in $[0,1]$.

Let $t \in \Omega$ and let $ u \in D_f(v(t)) $. Then there exists $ i $ such that
$$
u = \nabla f^i(v(t)) \quad \text{with} \quad v(t) \in \cl P^i.
$$

Let $ j $ be the unique index such that $ v(t) \in P^{j} $. We need to show
$$
\langle \dot{v}(t), \nabla f^i(v(t)) \rangle = \langle \dot{v}(t), \nabla f^{j}(v(t)) \rangle.
$$

Since $ v(t) \in \cl P^i $, take $ (y_{\ell})_{\ell \in \N} \subset P^i $ with $ y_{\ell} \to v(t) $. Because $ f = f^i $ on $ P^i $ and $ f $ is continuous,
$$
f^i(v(t)) = \lim_{\ell} f^i(y_{\ell}) = \lim_{\ell} f(y_{\ell}) = f(v(t)).
$$

Also, since $ v(t) \in P^{j} $,
$$
f^{j}(v(t)) = f(v(t)) = f^i(v(t)).
$$
So $t \in E_{ij}$ and from \eqref{eq:equalityset}, we have
$$
\langle \dot{v}(t), \nabla f^i(v(t)) \rangle = \langle \dot{v}(t), \nabla f^{j}(v(t)) \rangle.
$$

Thus, 
$$
\langle \dot{v}(t), u \rangle = \langle \dot{v}(t), \nabla f^{i}(v(t)) \rangle = \langle \dot{v}(t), \nabla f^{j}(v(t)) \rangle = \langle \dot{v}(t), \widetilde{\nabla} f(v(t)) \rangle = \frac{d}{dt } f(v(t)),
$$

where the last equality follows from Theorem~\myref{theo:murat}. Thus,
\begin{equation}\label{eq:chainruleproof}
\frac{d}{dt} f(v(t)) = \langle \dot{v}(t), u \rangle \quad \forall u \in D_f(v(t)).   
\end{equation}
Moreover, thanks to Lemma~\myref{lem:conservativeproperties}, we know that $D_f$ has closed graph, nonempty and compact values, and is locally bounded. Therefore, together with~\eqref{eq:chainruleproof}, $D_f$ is conservative for $f$, and so are $\conv D_f$ and $\partial^{c} f$; see \cite[Remark~3~(e) and Corollary~1]{bolte2021conservative}.
\end{proof}
\begin{remark}
    In Remark~\myref{rmk:selection}, we saw that the piecewise gradient of $f$ is a selection of $D_f$, which by Theorem~\myref{theo:main} is a conservative field for $f$. Furthermore, Theorem~\myref{theo:main} shows that, for all $p \in \left(\N \setminus \{0\}\right)\cup \{\infty\}$, the class of continuous piecewise-$C^p$ functions is a subset of the class of path-differentiable functions.
\end{remark}

\begin{definition}[Semismooth generalized gradient~{\cite{ermol1998stochastic}}]
    A locally bounded, graph-closed set-valued map $D \colon \Rm \rightrightarrows \Rm$ with nonempty, compact, and convex values is a Norkin or semismooth generalized gradient of a locally Lipschitz function $\varphi\colon \Rm \to \R$ if
    $$
    \sup_{\xi\in D(x+h)}
    \left|
    \varphi(x+h)-\varphi(x)-\langle \xi,h\rangle
    \right|
    =o(\|h\|)
    \quad \text{as} \quad h\to0.
    $$
\end{definition}

Next, we show that the convex hull $\conv D_f$ of $D_f$, which is also conservative for $f$, is a Norkin or semismooth generalized gradient of $f$.

\begin{proposition}[$\conv D_f$ is a semismooth generalized gradient]
\label{prop:piecewise-cp-semismooth2}
Let $f\colon \Rm \to \R$ be continuous and piecewise-$C^{p}$, and let $\{P^{j}\}_{j=1}^{{J}}$ and $\{f^{j}\}_{j=1}^{{J}}$ denote a Borel partition and a collection of $C^{p}$ functions defining a representation of $f$ as in Definition~\myref{def:piecewisec1}.
Let $D_f$ be as defined in \eqref{eq:defconsvfield}. 

Then the convex-valued conservative field
$
\conv D_f
$
for $f$
is a semismooth generalized gradient of $f$.
\end{proposition}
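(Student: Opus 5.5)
The structural requirements on $\conv D_f$ (locally bounded, graph-closed, nonempty compact convex values) follow from Lemma~\myref{lem:conservativeproperties}. The convex hull of a finite set is compact and convex. Graph-closedness passes to the convex hull by Carathéodory's theorem: write $\xi_\ell\in\conv D_f(x_\ell)$ as a convex combination of at most $m+1$ elements of $D_f(x_\ell)$. Local boundedness keeps these elements in a compact set, so we can extract convergent subsequences of both the points and the weights and pass to the limit using the closed graph of $D_f$. So the whole task is the $o(\|h\|)$ estimate.

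Since $\xi\mapsto |f(x+h)-f(x)-\langle\xi,h\rangle|$ is convex, its supremum over $\conv D_f(x+h)$ equals its supremum over $D_f(x+h)$. It therefore suffices to bound
$$|f(x+h)-f(x)-\langle \nabla f^i(x+h),h\rangle| \quad\text{whenever } x+h\in\cl P^i.$$
Fix $x$. There is $r>0$ such that every index $i$ with $\cl P^i\cap B(x,r)\neq\varnothing$ satisfies $x\in\cl P^i$. Indeed, otherwise some index whose closure misses $x$ would have closure points arbitrarily near $x$, and since there are finitely many indices and the closures are closed, $x$ would lie in that closure, a contradiction. Hence, for $\|h\|<r$ and $x+h\in\cl P^i$, we have $x\in\cl P^i$.

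By continuity of $f$ and the identity $f=f^i$ on $P^i$, we get $f=f^i$ on all of $\cl P^i$ (the argument used in the proof of Theorem~\myref{theo:main}). Thus $f(x+h)=f^i(x+h)$ and $f(x)=f^i(x)$, and the quantity becomes
$$|f^i(x+h)-f^i(x)-\langle\nabla f^i(x+h),h\rangle|.$$
Since $f^i$ is $C^1$, the mean value theorem bounds this by
$$\|h\|\sup_{\|z-x\|\le\|h\|}\|\nabla f^i(z)-\nabla f^i(x+h)\|,$$
and uniform continuity of $\nabla f^i$ on a compact neighborhood of $x$ makes this $o(\|h\|)$. Taking the maximum over the finitely many indices $i$ active near $x$ gives a uniform $o(\|h\|)$ bound for the supremum.

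The only delicate point is the localization step. It ensures that every selected component $f^i$ with $x+h\in\cl P^i$ is also active at $x$, so that it agrees with $f$ at both $x$ and $x+h$. This uses only finiteness of the partition and continuity of $f$.
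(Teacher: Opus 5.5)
Your proof is correct and follows essentially the same route as the paper: both localize so that every index $j$ with $x+h\in\cl P^j$ also satisfies $x\in\cl P^j$, then use $f=f^j$ on $\cl P^j$ at both points, and finally bound $|f^j(x+h)-f^j(x)-\langle\nabla f^j(x+h),h\rangle|$ via the integral form of the mean value theorem and continuity of the finitely many gradients. The differences are cosmetic. The paper expands $\xi\in\conv D_f(x+h)$ as an explicit convex combination and applies the triangle inequality, where you use convexity of $\xi\mapsto|f(x+h)-f(x)-\langle\xi,h\rangle|$. You also check graph-closedness of $\conv D_f$ via Carath\'eodory, which the paper leaves implicit.
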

\begin{proof}
Define
$
I(x)
\coloneq
\{j\in\{1,\ldots,J\}:x\in\cl P^j\}.
$
Then
$
D_f(x)
=
\{\nabla f^j(x):j \in I(x)\}.
$
Continuity of $f$ implies that
\begin{equation}
\label{eq:activeselectionagreement}
f^j(x)=f(x)
\quad
\text{whenever }j \in I(x).
\end{equation}
Finiteness of the partition implies that, for every
$x\in\Rm$, there exists a neighborhood $U$ of $x$
such that
\begin{equation}
\label{eq:activesetinclusion}
I(y) \subset I(x)
\quad
\text{for every }y \in U.
\end{equation}
Fix
$x\in\Rm$, let $y=x+h\in U$, and take
$\xi\in \conv D_f(y)$. Write
$$
\xi
=
\sum_{j\in I(y)}
\lambda_j\nabla f^j(y),
\qquad
\lambda_j\geq0,
\qquad
\sum_{j\in I(y)}\lambda_j=1.
$$
For every $j\in I(y)$, Equations
\eqref{eq:activeselectionagreement} and
\eqref{eq:activesetinclusion} give
$$
f^j(y)=f(y)
\qquad\text{and}\qquad
f^j(x)=f(x).
$$
Consequently,
$$
\begin{aligned}
f(y)-f(x)-\langle\xi,h\rangle
&=
\sum_{j\in I(y)}
\lambda_j
\left(
f^j(y)-f^j(x)
-\langle\nabla f^j(y),h\rangle
\right).
\end{aligned}
$$
For each $j$, the fundamental theorem of calculus yields
\begin{align*}
\left|f^j(y)-f^j(x)-\langle\nabla f^j(y),h\rangle \right|
&\leq
\int_0^1
\left|\left\langle
\nabla f^j(x+th)-\nabla f^j(y),h
\right\rangle\right|\,dt  \\
&\leq \|h\|\max_{j \in I(x)} \sup_{t \in [0,1]} \left\|\nabla f^j(x+th)-\nabla f^j(y)\right\|.
\end{align*}
Because the functions $f^{j}$ are finite and their
gradients are continuous, the right-hand side is $o(\|h\|)$ uniformly in $j \in I(x)$. 
So $\conv D_f$ is a semismooth
generalized gradient of $f$.
\end{proof}

Despite the previous result, the example below shows that, for piecewise smooth functions, convex-valued conservative fields are not, in general, semismooth generalized gradients.
\begin{example}
\label{ex:conservative-not-norkin}
Let
$
f\colon \R\to\R,
\;
f(x)= 1 \text{ for all } x \in \R,
$
and define the closed countable set
$
S\coloneq\{0\}\cup\left\{\frac{1}{n}:n\in\N\setminus\{0\}\right\}.
$
$f$ is trivially $C^{\infty}$ everywhere, hence piecewise smooth.
Consider the set-valued map
$$
D(x)\coloneq
\begin{cases}
[-3,2], & x\in S,\\
\{0\},  & x\notin S.
\end{cases}
$$
The mapping $D$ is locally bounded, graph-closed, and has
nonempty, compact, and convex values. We claim that $D$ is a conservative field for $f$. Indeed, let
$v:[0,1]\to\R$ be Lipschitz. Since $S$ is
countable,
$$
\dot{v}(t)=0
\quad
\text{a.e.} \quad t \in \{r \in [0,1]: v(r) \in S\}.
$$
This follows from Lemma~\myref{lem:stampacchia} applied to $t \mapsto v(t) - s$ for all $s\in S$ to get
$$
\dot{v}(t) = 0 \quad \text{a.e.}\quad t \in \{r \in [0,1]: v(r) = s\},
$$
and then taking the countable union over $S$. Therefore,
for almost all $t\in[0,1]$ and all $u\in D(v(t))$,
$$
\langle \dot{v}(t), u \rangle = 0
=\frac{d}{dt} f(v(t)).
$$
Thus $D$ is conservative for $f$. Nevertheless, $D$ is not a generalized gradient in the sense of Norkin. Indeed, take
$
x=0,
x_n=\frac{1}{n},
u_n=-2\in D(x_n).
$
Then
$$
\frac{
\left|
f(x_n)-f(0)-u_n(x_n-0)
\right|
}{
|x_n|
}
=2.
$$
So the Norkin semismoothness condition fails.
\end{example}

\section{Algorithmic application}\label{sect:application}
In this section, we analyze the dynamics of a stochastic first-order method driven by piecewise gradients, referred to as the stochastic piecewise-gradient method, for solving the following optimization problem:
\begin{equation*}
    \minimize_{x \in \Rm} \big \{f(x) \coloneq \frac{1}{n}\sum_{i=1}^{n} f_{i}(x)\big\},
\end{equation*}
where, for all $i \in \{1, \ldots, n\}$, $f_{i}\colon \Rm \to \R$ is continuous and piecewise-$C^{p}$, and $\displaystyle \{P^{j}_{i}\}_{j=1}^{{J_i}}$ and $\displaystyle \{f^{j}_{i}\}_{j=1}^{{J_i}}$ denote a Borel partition and a collection of $C^p$ functions defining a representation of $f_i$ as in Definition~\myref{def:piecewisec1}. By Lemma~\myref{lem:vectorspace}, $f$ is itself continuous and piecewise-$C^p$. We assume that $f$ is bounded below.

Consider the stochastic piecewise-gradient method:
\begin{equation}
    x_{k + 1} = x_{k} - c\alpha_{k} \widetilde{\nabla}f_{i_{k}}(x_{k}),
    \label{eq:gd}
\end{equation}
where $\alpha_k > 0$ for all $k \in \N$, $c>0$, $x_0 \in \Rm$ and $(i_k)_{k\in\N}$ is a sequence of random variables taking values in $\{1, \dots, n\}$. 

From Lemma~\myref{lem:vectorspace}, it is clear that $\displaystyle \widetilde{\nabla} f(x) \coloneq \frac{1}{n}\sum_{i=1}^{n}\widetilde{\nabla} f_i(x)$ is the piecewise gradient of $f$ corresponding to the common-refinement representation $
\left(\{P^j\}_{j=1}^{J},\{f^j\}_{j=1}^{J}\right)
$ of $f$ induced by the representations of the functions $f_i$. Let $D_f$ be the set-valued map constructed according to~\eqref{eq:defconsvfield} from this same representation of $f$.
Thus, for all $x \in \Rm$, 
\begin{equation}\label{eq:associatedinclusion}
 \widetilde{\nabla} f(x) = \frac{1}{n}\sum_{i=1}^{n}\widetilde{\nabla} f_i(x) \in D_f(x),
\end{equation}
and, by Theorem~\myref{theo:main}, $D_f$ is a conservative field for $f$.

\paragraph{Setting of the probability spaces and random variables.}
Equip $\Rm$ and $\R$ with their respective Borel $\sigma$-algebras
$\mathcal{B}(\Rm)$ and $\mathcal{B}(\R)$, and $\{1,\ldots,n\}$ with the power set $\mathcal{P}(\{1,\ldots,n\})$. Let
$
(\Omega_0,\mathcal{F}_0,\mathbb{P}^0)
$
be an abstract probability space. Let
$
\iota \colon \Omega_0 \to \{1,\ldots,n\}
$
be a random variable satisfying
$
\mathbb{P}^0(\iota=\ell)=\frac{1}{n},
\ell\in\{1,\ldots,n\}.
$
Consider the product probability space
$$
\Omega
=
\Omega_0^{\N},
\quad
\mathcal{F}
=
\mathcal{F}_0^{\otimes \N},
\quad
\mathbb{P}
=
(\mathbb{P}^0)^{\otimes \N}.
$$
For every
$
\omega=(\omega_0,\omega_1,\omega_2,\ldots)\in\Omega,
$
define 
$
i_k\colon \Omega \to \{1, \ldots, n\}
$
by
$
i_k(\omega)\coloneq\iota(\omega_{k})
$
for all $k\in\N$.
\begin{remark}
    By construction, $(i_k)_{k\in\N}$ is a sequence of i.i.d. random variables, each uniformly distributed on $\{1,\ldots,n\}$.
\end{remark}
Recall that, for $S \subset \Rm$ and $x \in \Rm$, $\displaystyle \dist(x, S) \coloneq \inf\{\|x - y\|: y \in S\}$. By convention, $\dist(x, \varnothing) = \infty$.

In the next theorems, we prove subsequence convergence of \eqref{eq:gd} to both conservative and Clarke critical points, as well as convergence of the function values.

\begin{theorem}[Conservative critical limit points]\label{theo:convconservative}
    Suppose that $p \geq m \geq 1$. Let $x_0 \in \Rm$. Suppose that $\alpha_k = o(1/\log(k))$ as $k \to \infty$ and $\sum_{k = 0}^{\infty} \alpha_k = \infty$. Assume that the iterates $(x_k)_{k\in\N}$ generated by \eqref{eq:gd} with $c=1$ are bounded $\mathbb{P}$-almost surely. Then $\mathbb{P}$-almost surely,
    \begin{equation*}
        \lim_{k\to \infty} \dist\left(x_k, \{x \in \Rm: 0 \in \conv D_f(x)\} \right)  = 0,
    \end{equation*}
and the sequence of function values $(f(x_k))_{k \in \N}$ converges. Moreover, the set of limit points of $(x_k)_{k\in\N}$ is connected.
\end{theorem}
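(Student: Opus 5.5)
The plan is to rewrite \eqref{eq:gd} with $c=1$ as a stochastic approximation of the differential inclusion $\dot x(t)\in-\conv D_f(x(t))$. We then apply the Benaïm--Hofbauer--Sorin framework for stochastic approximations of differential inclusions, using $f$ as a Lyapunov function.

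\textbf{Step 1: decomposition.} Write
$$x_{k+1}=x_k-\alpha_k\bigl(\widetilde{\nabla}f(x_k)+\xi_{k+1}\bigr),\qquad \xi_{k+1}\coloneq\widetilde{\nabla}f_{i_k}(x_k)-\widetilde{\nabla}f(x_k).$$
Let $\mathcal{F}_k\coloneq\sigma(i_0,\dots,i_{k-1})$. Then $x_k$ is $\mathcal{F}_k$-measurable and $i_k$ is independent of $\mathcal{F}_k$ and uniform, so $\mathbb{E}[\xi_{k+1}\mid\mathcal{F}_k]=0$. Measurability of $x\mapsto\widetilde{\nabla}f_i(x)$ follows from the Borel partition. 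By \eqref{eq:associatedinclusion}, $\widetilde{\nabla}f(x_k)\in D_f(x_k)\subset\conv D_f(x_k)$.

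\textbf{Step 2: noise control.} On the almost sure event where $(x_k)$ stays in a compact set $K$, local boundedness of each $D_{f_i}$ (Lemma~\myref{lem:conservativeproperties}) gives a bound $\|\xi_{k+1}\|\le 2\max_i\sup_{K}\|\widetilde{\nabla}f_i\|$. The noise is therefore a bounded martingale difference. This is a random bound, so I would localize: stop the process on leaving the ball of radius $R$ and let $R\to\infty$. For bounded martingale increments, the condition $\alpha_k=o(1/\log k)$ is exactly the Benaïm-type condition (via Azuma--Hoeffding) ensuring that the interpolated noise satisfies the asymptotic-pseudotrajectory requirement. I would also check that the condition holds along the stopped sequence.

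\textbf{Step 3: APT and Lyapunov function.} The map $\conv D_f$ is graph-closed, locally bounded, and has nonempty compact convex values (Lemma~\myref{lem:conservativeproperties}). Together with $\sum\alpha_k=\infty$, Benaïm--Hofbauer--Sorin then shows that the linearly interpolated process is almost surely a bounded asymptotic pseudotrajectory of the inclusion. Its limit set is therefore internally chain transitive, which is compact and connected; this yields the connectedness claim.

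For the Lyapunov property, take a solution $x(\cdot)$ of $\dot x\in-\conv D_f(x)$. By Theorem~\myref{theo:main}, $\conv D_f$ is conservative, so for a.e. $t$ and all $u\in\conv D_f(x(t))$, $\frac{d}{dt}f(x(t))=\langle\dot x(t),u\rangle$. Choosing $u=-\dot x(t)$ gives $\frac{d}{dt}f(x(t))=-\|\dot x(t)\|^2$. Moreover $\|\dot x(t)\|\ge\dist(0,\conv D_f(x(t)))$, which is strictly positive off the critical set $Z\coloneq\{x:0\in\conv D_f(x)\}$. Hence $f$ strictly decreases along any trajectory not starting in $Z$, and $f$ is a Lyapunov function for $Z$.

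\textbf{Step 4: conclusion and main obstacle.} Lemma~\myref{lem:morsesard} (which uses $p\ge m$) gives $\lambda^1(f(Z))=0$, so $f(Z)$ has empty interior. By Benaïm--Hofbauer--Sorin Proposition 3.27, every internally chain transitive set $L$ then lies in $Z$ and $f$ is constant on $L$. This gives $\dist(x_k,Z)\to0$ and convergence of $f(x_k)$, the latter by continuity of $f$ and constancy on the limit set.

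The main obstacle is Step 2. The noise bound holds only on the bounded event, and the step-size condition is the borderline $o(1/\log k)$ rather than square-summability. So the localization and the sub-Gaussian martingale estimate have to be done carefully.
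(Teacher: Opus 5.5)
Your plan follows the paper's proof. It uses the same martingale-difference decomposition, the Bena\"{\i}m--Hofbauer--Sorin perturbed-solution and internal-chain-transitivity machinery for $\dot x\in-\conv D_f(x)$, $f$ as a Lyapunov function via conservativity of $\conv D_f$, and Lemma~\ref{lem:morsesard} (where $p\ge m$ enters) feeding \cite[Proposition~3.27]{benaim2005stochastic}. Your Step~2 (localization, then Hoeffding/Azuma under $\alpha_k=o(1/\log k)$) and your Lyapunov computation are sound; they unpack what the paper takes from \cite[Lemmas~3.13 and~3.14]{bolte2023subgradient}.

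\textbf{Connectedness.} This step does not go through as written. Bena\"{\i}m--Hofbauer--Sorin give that the limit set $L(y)=\bigcap_{t\ge0}\cl\{y(s):s\ge t\}$ of the affine interpolated process is internally chain transitive, hence connected. The theorem, however, concerns the set of limit points of the discrete sequence $(x_k)_{k\in\N}$. In general that set is only a subset of $L(y)$, because points on the segments $[x_k,x_{k+1}]$ also accumulate: a sequence alternating between two points has two limit points, while its interpolation has a whole segment as limit set. You need $\|x_{k+1}-x_k\|\to0$ almost surely. Here it follows from $\|x_{k+1}-x_k\|\le\alpha_k\max_i\sup_K\|\widetilde{\nabla}f_i\|$ on the bounded event together with $\alpha_k\to0$. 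Then, writing $\tau_k=\sum_{\ell=0}^{k-1}\alpha_\ell$, if $y(t_j)\to z$ with $t_j\in[\tau_{k_j},\tau_{k_j+1}]$, you get $\|y(t_j)-x_{k_j}\|\le\|x_{k_j+1}-x_{k_j}\|\to0$. So the two sets coincide and connectedness transfers; this is exactly the closing step of the paper's proof.

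\textbf{Global existence of solutions.} The set-valued semiflow behind internal chain transitivity requires solutions of $\dot x\in-\conv D_f(x)$ on $[0,\infty)$ from every initial point. The standing hypothesis in \cite{benaim2005stochastic} that guarantees this is linear growth, which a merely locally bounded $\conv D_f$ need not satisfy. The paper obtains global existence from the assumption that $f$ is bounded below, via \cite[Remark~3.11]{bolte2023subgradient}. Along solutions $\int_0^T\|\dot x(t)\|^2\,dt\le f(x(0))-\inf f$, which rules out finite-time blow-up. Your plan never uses this hypothesis. Either invoke it in this way, or, on each event $\{\sup_k\|x_k\|\le R\}$, rescale the field outside a large ball to enforce linear growth.
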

\begin{proof} 
    We can rewrite the iteration~\eqref{eq:gd} as
    \begin{equation*}
        x_{k+1} = x_{k} -\alpha_k \widetilde{\nabla} f(x_k) - \alpha_k \eta_k, \quad\text{where } \eta_k = \widetilde{\nabla}f_{i_k}(x_k) - \widetilde{\nabla} f(x_k).
    \end{equation*}
    For all $k \in \N$, $\mathbb{P}$-almost surely, we have $\mathbb{E}\left[\widetilde{\nabla}f_{i_k}(x_k) \mid \mathfrak{F}_{k}\right] = \widetilde{\nabla} f(x_k)$ and $\mathbb{E}[\eta_{k} \mid \mathfrak{F}_{k}] = 0$, where $\mathfrak{F}_k$ is the $\sigma$-algebra generated by $\{x_0, i_0, \ldots, i_{k-1}\}$. 
    
    From \eqref{eq:associatedinclusion}, we recall that $\widetilde{\nabla} f(x) \in \conv D_f(x)$ for all $x \in \Rm$, and $\conv D_f$ is a conservative field for $f$.
    
    Since Theorem~\myref{theo:main} shows that the piecewise gradients $\widetilde{\nabla} f_i$ are selections of the conservative fields $D_{f_i}$, we follow the argument of \cite{benaim2005stochastic}, where the continuous flow
    \begin{equation}\label{eq:flow}
        \dot{x}(t) \in - \conv D_f(x(t)),
    \end{equation}
    is used to analyze the dynamics of $(x_k)_{k\in\N}$. This approach, often called the ODE method, is standard in stochastic approximation;  see \cite{bolte2023subgradient, le2024nonsmooth, xiao2024adam, davis2020stochastic} and the references therein.
    
    Because $\conv D_f$ has convex values, is conservative for $f$, and $f$ is bounded below, \cite[Remark 3.11]{bolte2023subgradient} ensures that \eqref{eq:flow} has a solution on $[0,\infty)$. That is, for all $x_0 \in \Rm$, there exists an absolutely continuous curve $x\colon [0,\infty) \to \Rm$ such that $x(0) = x_0$ and $\dot{x}(t) \in - \conv D_f(x(t))$ for almost all $t\in[0,\infty)$.
    
    The function $\psi$ defined by 
    $\displaystyle
    \psi(x)\coloneq\max_{1\leq i\leq n}\max_{y \in D_{f_i}(x)}\left\| y\right\|
    $
    is locally bounded. Therefore, all conditions of Assumption~7 in \cite{bolte2023subgradient} are satisfied in our setting, and \cite[Lemma~3.14~(3)]{bolte2023subgradient} gives
    \begin{equation}\label{eq:condbenaim}
        \forall T > 0, \quad \lim_{k\to\infty} \sup_{N \geq k} \left\{ \left\| \sum_{\ell=k}^{N} \alpha_{\ell} \eta_{\ell} \right\| : \sum_{\ell=k}^{N} \alpha_{\ell} \leq T \right\} = 0  \quad \mathbb{P}\text{-a.s.}
    \end{equation}
    Let $y\colon [0,\infty) \to \Rm$ be the continuous-time \emph{affine interpolated process} associated with $(x_k)_{k\in\N}$ as defined in \cite[Definition~$\mathrm{IV}$]{benaim2005stochastic}. Items (i) and (ii) of Proposition~1.3 in \cite{benaim2005stochastic} are given by \eqref{eq:condbenaim} and the almost-sure boundedness assumption of the iterates, respectively. Thus, according to \cite[Proposition~1.3]{benaim2005stochastic}, $y$ is a perturbed solution to \eqref{eq:flow} almost surely. 
    By \cite[Theorems~4.2~and~4.3]{benaim2005stochastic}, the limit set of $y$,
    $$
    L(y)\coloneq\bigcap_{t\geq0}\cl\left\{y(s): s \geq t\right\},
    $$
    is, $\mathbb{P}$-a.s., internally chain transitive; see \cite[Definition~$\mathrm{VI}$]{benaim2005stochastic}. In addition, $f$ is a Lyapunov function for the critical set $\Lambda \coloneq \{x \in \Rm: 0 \in \conv D_f(x)\}$ and for the flow \eqref{eq:flow} (see \cite[Lemma~3.13]{bolte2023subgradient}), and by Lemma~\myref{lem:morsesard}, $f(\Lambda)$ has empty interior. Thus, from \cite[Proposition~3.27]{benaim2005stochastic}, we get that $f$ is constant on $L(y)$  and $L(y) \subset \Lambda$, $\mathbb{P}$-a.s. Since $\psi$ is locally bounded, $\|x_{k+1} - x_k\| \to 0$ $\mathbb{P}$-a.s., which implies that the set of limit points of $(x_k)_{k\in\N}$ is exactly $L(y)$ and is a connected subset of $\Lambda$. This completes the proof.
\end{proof}
\begin{theorem}[Clarke critical limit points]\label{theo:clarkcrit}
Suppose that $p \geq \max\{m, 2\}$. Assume that $(\alpha_k)_{k\in\N}$ satisfies 
$
\sum_{k=0}^{\infty}\alpha_k=\infty
$
 and 
$
\alpha_k = o(1/\log(k))
$ as $k \to \infty$.
Suppose also that, for all $i \in \{1, \ldots, n\}$, the interface
\begin{equation}\label{eq:interface}
\mathcal{I}_i
\coloneq
\bigcup_{\substack{j,\ell\in\{1,\ldots,J_i\}\\j\neq \ell}}
\left(\cl{P^j_i}\cap\cl{P^{\ell}_i}\right)\; \text{ is Lebesgue negligible.}
\end{equation}
Then there exists $\Gamma\subset(0,\infty)$ of full Lebesgue measure such that, for Lebesgue-almost every $c > 0$,
$
    c\alpha_k \in\Gamma
    \text{ for all } k\in\N.
$
For each such $c > 0$, there exists $G \subset \Rm$ of full Lebesgue measure with the following property: if $x_0 \in G$ and the iterates $(x_k)_{k\in\N}$ generated by \eqref{eq:gd} are bounded $\mathbb{P}$-almost surely, then $\mathbb{P}$-almost surely,
$$
    \lim_{k\to\infty}
    \dist\!\left(
        x_k,
        \{x\in\Rm:0\in\partial^c f(x)\}
    \right)
    =0 ,
$$
and the sequence of function values $(f(x_k))_{k\in\N}$ converges.  Moreover, the set of limit points of $(x_k)_{k\in\N}$ is connected.
\end{theorem}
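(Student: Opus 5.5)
The plan is to follow the strategy of Bolte--Pauwels / Bianchi--Hachem--Schechtman for SGD on path differentiable functions: the iterates almost surely avoid the ``bad'' set where the conservative field differs from the gradient. Then the Clarke critical set arises as the limit set through the Clarke-type flow.

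\emph{Step 1: a full-measure set of differentiability.} By~\eqref{eq:interface} and Lemma~\myref{lem:piecewiseCplocallyC2ae}, each $f_i$ is $C^p$, hence $C^2$, on the open set $\Rm\setminus\mathcal{I}_i$. This set has full measure, and on it $\widetilde{\nabla} f_i=\nabla f_i$. Set $\mathcal{I}\coloneq\bigcup_i\mathcal{I}_i$, a closed null set; on its complement $R\coloneq\Rm\setminus\mathcal{I}$ every $f_i$ is locally $C^2$.

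\emph{Step 2: avoiding $\mathcal{I}$.} For each index $i$ and stepsize $\gamma>0$, consider the map $T_{i,\gamma}(x)\coloneq x-\gamma\widetilde{\nabla}f_i(x)$. On $R$ it is $C^1$ with Jacobian $I-\gamma\nabla^2 f_i(x)$. Define $\Gamma$ as the set of $\gamma>0$ such that, for every $i$, the set $\{x\in R:\det(I-\gamma\nabla^2 f_i(x))=0\}$ is null. By Fubini, $\Gamma$ has full measure: for fixed $x$ only the finitely many values $\gamma=1/\lambda$, with $\lambda$ an eigenvalue of $\nabla^2 f_i(x)$, are excluded. If $(c\alpha_k)$ lies in $\Gamma$, each $T_{i,c\alpha_k}$ is locally a diffeomorphism outside a null set. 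Its restriction to $R$ therefore maps null sets to null sets backwards, i.e. preimages of null sets are null (a Lusin-N$^{-1}$ property, via a countable cover of $R$ by charts). The claim that almost every $c$ gives $c\alpha_k\in\Gamma$ for all $k$ follows because $\{c: c\alpha_k\notin\Gamma\}$ is null for each $k$, and a countable union of null sets is null. Induction over $k$ and over the finitely many index paths $(i_0,\dots,i_{k-1})$ then shows that the set of $x_0$ whose trajectory ever hits $\mathcal{I}$, or the null exceptional sets, is a countable union of null sets. Its complement is $G$. A subtle point is that iterates may leave $R$: one must take preimages inside $R$, using that a point of $\mathcal{I}$ reached at step $k+1$ has its predecessor in the preimage of the null set $\mathcal{I}$.

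\emph{Step 3: the ODE argument with Clarke.} For $x_0\in G$, almost surely every iterate lies in $R$, so $\widetilde{\nabla}f_{i_k}(x_k)=\nabla f_{i_k}(x_k)$. Moreover $\nabla f(x_k)\in\partial^c f(x_k)$, since $f$ is $C^1$ near $x_k$. I then rerun the proof of Theorem~\myref{theo:convconservative} verbatim with $\conv D_f$ replaced by $\partial^c f$. This substitution is justified because $\partial^c f$ is a convex-valued conservative field for $f$ by Theorem~\myref{theo:main}, because the noise condition~\eqref{eq:condbenaim} is unchanged, and because Lemma~\myref{lem:morsesard} gives Clarke critical values of measure zero. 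Benaïm--Hofbauer--Sorin then yields a connected limit set inside $\{0\in\partial^c f\}$ together with convergence of $f(x_k)$.

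The main obstacle is Step 2. First, $\widetilde{\nabla}f_i$ is only Borel off $R$, so the preimage argument must be confined to $R$. Second, the Jacobian-degenerate set must be handled carefully: the determinant being nonzero almost everywhere for fixed $\gamma$ requires measurability of $(x,\gamma)\mapsto\det(I-\gamma\nabla^2 f_i(x))$, which holds on $R$ by continuity.
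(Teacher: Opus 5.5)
Your proposal is correct and follows essentially the same route as the paper. It uses a Fubini argument to get a full-measure set $\Gamma$ of stepsizes with $\det(\operatorname{Id}_m-\gamma\nabla^2 f_i)\neq 0$ a.e., the Lusin-$N^{-1}$ property of the update maps via countably many local inverses, a countable intersection over finite index words to build $G$, and a rerun of the Bena\"im--Hofbauer--Sorin argument with $\partial^c f$ in place of $\conv D_f$. The only difference is bookkeeping: you work directly on $R=\Rm\setminus\bigcup_i\mathcal{I}_i$ with the actual update map, using that $\widetilde{\nabla}f_i=\nabla f_i$ there by the proof of Lemma~\ref{lem:piecewiseCplocallyC2ae}, whereas the paper extends $\psi_{i,\alpha}$ by the identity off $O_i$ and invokes Despr\'es' full-measure set $\Omega_{f_i}$ to match piecewise and true gradients along the trajectory.
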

\begin{remark}\label{rmk:relu-interface}
The interface assumption in \eqref{eq:interface} is satisfied whenever each relevant boundary has lower dimension or, more generally, measure zero.
Typical examples include fully connected neural networks with piecewise-affine activations such as ReLU; see \cite[Lemma~1 and Theorem~1]{pmlr-v80-laurent18b} and \cite[Corollary~33 and Definition~17]{bona2023parameter}. Another instance where it is satisfied is when the partitions are stratifications with a frontier condition: 
$$
\forall \ell \neq j, \quad P^j_i\cap \cl P^{\ell}_i \neq \varnothing \implies P^j_i \subset (\cl P^{\ell}_i) \setminus P^{\ell}_i \text{ and } \dim P^j_i < \dim P^{\ell}_i;
$$
see an example in Remark~\myref{rmk:partitionexample}. 
 
The assumption \eqref{eq:interface} is needed to ensure, thanks to Lemma~\myref{lem:piecewiseCplocallyC2ae}, that $f_i$ is locally $C^2$ almost everywhere, which is a key ingredient in the proof of Theorem~\myref{theo:clarkcrit}. A merely Lipschitz continuous and piecewise-$C^\infty$ function need not be locally $C^1$ on a set of positive measure; see Proposition~\myref{claim:counter}, where we construct such a function.
\end{remark}
\begin{proof}[Proof of Theorem~\myref{theo:clarkcrit}] 
This proof builds on key ideas from \cite{bianchi2022convergence}. It
uses less probabilistic machinery---for example, it does not use Markov kernels---and is adapted to accommodate decreasing stepsizes and a full-measure set of deterministic initializations\footnote{It also allows random initializations according to distributions that are absolutely continuous with respect to the Lebesgue measure.}. We divide the proof into four steps.

Lemma~\myref{lem:piecewiseCplocallyC2ae} implies that each $f_i$ is locally $C^2$ Lebesgue-almost everywhere. Thus, for every $i\in\{1,\dots, n\}$, there exists an open set
$O_i\subset\Rm$ such that
$
\lambda^{m}(\Rm\setminus O_i)=0
$
and
$
f_i|_{O_i}
$
is $C^2$ on $O_i$.
For $\alpha>0$, define
$
\psi_{i,\alpha}(x)
=
x-\alpha\nabla f_i(x),
 \text{ if } x\in O_i
$
and
$
\psi_{i,\alpha}(x)
=
x,
 \text{ otherwise}.
$
Note that $\psi_{i,\alpha}$ is measurable.

\noindent \emph{{First step.}} We start by constructing $\Gamma$, a set of full Lebesgue measure in $(0, \infty)$, such that, for any $\alpha \in \Gamma$ and any $i\in\{1, \ldots, n\}$,  the derivative of $\psi_{i,\alpha}$ is invertible Lebesgue-almost everywhere on $O_i$.

Fix $i\in\{1,\ldots,n\}$. For $(x,\alpha)\in O_i\times(0,\infty)$,
define
$
h_i(x,\alpha)
\coloneq
\det\left(\operatorname{Id}_m-\alpha\nabla^2 f_i(x)\right).
$
Since $f_i$ is $C^2$ on $O_i$, the map
$
(x,\alpha)\mapsto h_i(x,\alpha)
$
is continuous, and therefore measurable.
Consider the measurable set
$
S_i
\coloneq
\left\{
(x,\alpha)\in O_i\times(0,\infty):
h_i(x,\alpha)=0
\right\}.
$
For fixed $x\in O_i$, let
$
\mu_1(x),\dots,\mu_m(x)
$
denote the eigenvalues of the symmetric matrix $\nabla^2 f_i(x)$.
Then
$
h_i(x,\alpha)
=
\prod_{j=1}^m\left(1-\alpha\mu_j(x)\right).
$
Hence
$
h_i(x,\alpha)=0
$
only if
$
\alpha=\frac{1}{\mu_j(x)}
$
for some $j$ such that $\mu_j(x)>0$. Consequently, the section
$
S_i^x
\coloneq
\left\{
\alpha>0:(x,\alpha)\in S_i
\right\}
$
contains at most $m$ points. In particular,
$
\lambda^1\left(S_i^x\right)=0
\;
\text{ for all }x\in O_i.
$
Applying the Fubini--Tonelli theorem to the indicator function
$\mathbf{1}_{S_i}$ yields
$$
(\lambda^{m}\otimes\lambda^1)(S_i)
=
\int_{O_i}
\lambda^1\left(S_i^x\right)\,
d\lambda^{m}(x)
=0.
$$
Using the Fubini--Tonelli theorem in the reverse order, we obtain
$
0
=
\int_0^\infty
\lambda^{m}(S_i^\alpha)\,
d\alpha,
$
where
$$
S_i^\alpha
\coloneq
\left\{
x\in O_i:(x,\alpha)\in S_i
\right\}
=
\left\{
x\in O_i:
\det\left(\operatorname{Id}_m-\alpha\nabla^2f_i(x)\right)=0
\right\}.
$$
Since the integrand is nonnegative, it follows that
$
\lambda^{m}(S_i^\alpha)=0
\text{ for almost all }\alpha>0.
$
Therefore
$
{\Gamma}_i
\coloneq
\left\{
\alpha>0:
\lambda^{m}(S_i^\alpha)=0
\right\}
$
has full Lebesgue measure in $(0, \infty)$. We then set $\Gamma \coloneq \bigcap_{i=1}^{n} \Gamma_i$. $\Gamma$ has full Lebesgue measure in $(0, \infty)$.

\noindent \emph{{Second step.}} We show that, for any Lebesgue-measurable $A \subset \Rm$,
\begin{equation*}
    \lambda^{m}(A)=0 \implies \lambda^{m}\left(\psi_{i,\alpha}^{-1}(A)\right) = 0 \quad \forall i \in \{1, \ldots, n\} \quad \forall \alpha \in \Gamma.
\end{equation*}
Fix $\alpha\in {\Gamma}$. Let $i \in \{1, \ldots, n\}$ and define
$$
U_{i,\alpha}
\coloneq
O_i\setminus S_i^\alpha
=
\left\{
x\in O_i:
\det\left(\operatorname{Id}_m-\alpha\nabla^2f_i(x)\right)\neq0
\right\}.
$$
The set $U_{i,\alpha}$ is open because the map
$
x\mapsto
\det\left(\operatorname{Id}_m-\alpha\nabla^2f_i(x)\right)
$
is continuous on the open set $O_i$.
Moreover,
$$
\Rm\setminus U_{i,\alpha}
\subset
(\Rm\setminus O_i)\cup S_i^\alpha,
$$
and both sets on the right-hand side are Lebesgue-null. Therefore
$
\lambda^{m}(\Rm\setminus U_{i,\alpha})=0.
$
For $x\in U_{i,\alpha}$,
$
\nabla \psi_{i,\alpha}(x)
=
\operatorname{Id}_m-\alpha\nabla^2f_i(x),
$
and this matrix is invertible. By the inverse function theorem, for
every $x\in U_{i,\alpha}$ there exists an open neighborhood
$V_x\subset U_{i,\alpha}$ such that
$
\psi_{i,\alpha}|_{V_x}
\colon
V_x\to \psi_{i,\alpha}(V_x)
$
is a $C^1$-diffeomorphism. Since $\Rm$ is second countable, there exists a countable family of such neighborhoods $(V_{\ell})_{\ell \in \N}$ satisfying
$
U_{i,\alpha}
=
\bigcup_{\ell=0}^{\infty} V_{\ell}.
$

Let $A\subset\Rm$ be a Lebesgue-measurable set such that
$
\lambda^{m}(A)=0.
$
For each $\ell \in \N$, the inverse map
$$
\left(\psi_{i,\alpha}|_{V_{\ell}}\right)^{-1}
\colon
\psi_{i,\alpha}(V_{\ell})\to V_{\ell}
$$
is $C^1$, hence locally Lipschitz. Since locally Lipschitz maps send
Lebesgue-null sets to Lebesgue-null sets,
$
\lambda^{m}
\left(
\psi_{i,\alpha}^{-1}(A)\cap V_{\ell}
\right)
=0
$
for any $\ell \in \N$.
Thus,
$
\lambda^{m}
\left(
\psi_{i,\alpha}^{-1}(A)\cap U_{i,\alpha}
\right)
=0.
$
Since $\Rm\setminus U_{i,\alpha}$ is null,
$
\lambda^{m}
\left(
\psi_{i,\alpha}^{-1}(A)
\right)
=0.
$ Hence
\begin{equation}\label{eq:abscontdeter}
    \lambda^{m}(A)=0 \implies \lambda^{m}\left(\psi_{i,\alpha}^{-1}(A)\right) = 0 \quad \forall i \in \{1, \ldots, n\} \quad \forall \alpha \in \Gamma.
\end{equation}
Before moving to the third step, let
$
\Gamma^{c} \coloneq (0, \infty) \setminus \Gamma,
$
which is Lebesgue negligible. For each $ k $, the bad scalings are
$
\Gamma^{c}_k \coloneq \{ c > 0 : c \alpha_k \in \Gamma^{c} \} = \frac{1}{\alpha_k} \Gamma^{c}.
$
Since multiplication by $ 1/\alpha_k $ preserves negligible sets,
$
\lambda^1(\Gamma^{c}_k) = 0.
$
Therefore,
$
\Gamma^{c}_* \coloneq \bigcup_{k=0}^{\infty} \Gamma^{c}_k
$
is Lebesgue negligible.
For every
$
c \in (0, \infty) \setminus \Gamma^{c}_*,
$
$
 c \alpha_k \in \Gamma \text{ for all } k \in \N.
$

Fix 
$
c \in (0, \infty) \setminus \Gamma^{c}_*, \text{ and set } \beta_k = c \alpha_k \text{ for every } k \in \N.
$

\noindent \emph{{Third step.}} We construct the full-measure set $G\subset \Rm$ of good initialization points.

Define
$\displaystyle
E \coloneq\bigcap_{i=1}^{n}\left(\Omega_{f_i} \cap O_i\right),
$
where $\Omega_{f_i}$ is the set of points where $f_i$ is differentiable and its gradient $\nabla f_i$ equals its piecewise gradient $\widetilde{\nabla} f_i$. By \cite[Corollary 1]{despres2025associated},  $\Omega_{f_i}$ has full Lebesgue measure. Therefore,
$
\lambda^m(\Rm \setminus E)=0.
$ 
For every finite word
$\mathbf{i}=(i_0,\ldots,i_{\ell-1})\in\{1,\ldots, n\}^{\ell}$,
let
$
\Psi_{\mathbf{i}}^{(\ell)}
=
\psi_{i_{\ell -1},\beta_{\ell-1}}\circ\cdots\circ \psi_{i_0,\beta_0},
$
with the convention
$
\{1,\ldots,n\}^{0}=\{\varnothing\}
$
and
$
\Psi^{(0)}_{\varnothing}=\operatorname{Id}_m.
$
We now define the set of admissible initial points by
$$
G
\coloneq
\bigcap_{\ell=0}^{\infty}
\;
\bigcap_{\mathbf{i}\in\{1,\ldots,n\}^{\ell}}
\left(\Psi_{\mathbf{i}}^{(\ell)}\right)^{-1}(E).
$$
Then $x\in G$ if and only if
$
\Psi_{\mathbf{i}}^{(\ell)}(x)\in E
$
for every $\ell\geq 0$ and every finite word
$\mathbf{i}\in\{1,\ldots,n\}^{\ell}$.
Since
$\displaystyle
\Rm \setminus G
=
\bigcup_{\ell=0}^{\infty}
\;
\bigcup_{\mathbf{i}\in\{1,\ldots,n\}^{\ell}}
\left(\Psi_{\mathbf{i}}^{(\ell)}\right)^{-1}(\Rm \setminus E),
$
we want to prove that each set
$
\left(\Psi_{\mathbf{i}}^{(\ell)}\right)^{-1}(\Rm \setminus E)
$
has Lebesgue measure zero.
As the property~\eqref{eq:abscontdeter} is preserved under finite
composition, every map $\Psi_{\mathbf{i}}^{(\ell)}$ also satisfies
$\displaystyle
\lambda^m(A)=0
\implies
\lambda^m\!\left(
\left(\Psi_{\mathbf{i}}^{(\ell)}\right)^{-1}(A)
\right)=0
$
for any measurable set $A \subset \Rm$.
Applying this with $A=\Rm \setminus E$ gives
$
\lambda^m\!\left(
\left(\Psi_{\mathbf{i}}^{(\ell)}\right)^{-1}(\Rm \setminus E)
\right)=0.
$
For each fixed $\ell$, there are only $n^{\ell}$ words of length $\ell$,
so
$\displaystyle
\bigcup_{\mathbf{i}\in\{1,\ldots,n\}^{\ell}}
\left(\Psi_{\mathbf{i}}^{(\ell)}\right)^{-1}(\Rm \setminus E)
$
is a finite union of null sets and therefore has measure zero. Finally,
$
\Rm \setminus G
$
is a countable union of null sets. Hence,
$
\lambda^m(\Rm \setminus G)=0,
$
and $G$ has full Lebesgue measure.

\noindent \emph{{Fourth step.}} We prove convergence results for
\eqref{eq:gd}.

Let $x_0 \in G$. Then for all $k \in \N$, $\mathbb{P}$-a.s.,
\begin{align*}
    x_k \in E \quad \text{and}\quad x_{k+1} = x_k - c\alpha_k \nabla f_{i_k}(x_k) = x_k - c\alpha_k \nabla f(x_k) - c\alpha_k \eta_{k},
\end{align*}
where $\eta_k \coloneq \nabla f_{i_{k}}(x_{k}) - \nabla f(x_{k})$ for every $k \in \N$. 

We know that $\mathbb{P}$-a.s., for all $k \in \N$, $\mathbb{E}\left[{\nabla}f_{i_k}(x_k) \mid \mathfrak{F}_{k}\right] = {\nabla} f(x_k)$ and $\mathbb{E}[\eta_{k} \mid \mathfrak{F}_{k}] = 0$, where $\mathfrak{F}_k$ is the $\sigma$-algebra generated by $\{x_0, i_0, \ldots, i_{k-1}\}$. We also have $\mathbb{P}$-almost surely, $\nabla f(x_{k}) \in \partial^c f(x_k)$ for all $k \in \N$. 

The Clarke subdifferential of $f$ is a conservative field for $f$ by Theorem~\myref{theo:main}. It has nonempty, convex and compact values, and is graph-closed and locally bounded. The flow
\begin{equation}\label{eq:flow2}
    \dot{x}(t) \in - \partial^c f(x(t)),
\end{equation}
is well-defined and admits a solution by \cite[Remark~3.11]{bolte2023subgradient}. The function $\psi$ defined by 
$\displaystyle
\psi(x)\coloneq\max_{1\leq i\leq n}\max_{y \in \partial^{c}{f_i}(x)}\left\| y\right\|
$
is locally bounded. Recall that $\beta_k=c\alpha_k$. Since
$
\beta_k=o(1/\log k) \text{ as } k \to \infty
\text{ and }
\sum_{k=0}^{\infty}\beta_k=\infty,
$
the same argument used to establish \eqref{eq:condbenaim}, now applied
to $(\beta_k\eta_k)_{k\in\N}$, gives, for all $T>0$,
$$
\lim_{k\to\infty}
\sup_{N\geq k}
\left\{
\left\|
\sum_{\ell=k}^{N}\beta_{\ell}\eta_{\ell}
\right\|:
\sum_{\ell=k}^{N}\beta_{\ell}\leq T
\right\}
=0
\quad\mathbb{P}\text{-a.s.}
$$
$f$ is a Lyapunov function for the set of Clarke critical points $\{x\in\Rm:0\in\partial^c f(x)\}$ and for the flow~\eqref{eq:flow2} (see \cite[Lemma~3.13]{bolte2023subgradient}). Finally, Lemma~\myref{lem:morsesard} yields that the set of Clarke critical values $f\left(\{x\in\Rm:0\in\partial^c f(x)\}\right)$ has empty interior. All the hypotheses used in the proof of Theorem~\myref{theo:convconservative} are therefore satisfied with $\partial^c f$ in place of $\conv D_f$ and $\beta_k$ in place of $\alpha_k$. Repeating that argument gives the result.
\end{proof}
The following proposition shows that, without the assumption on the interface in \eqref{eq:interface}, a piecewise-$C^{\infty}$ function need not be locally $C^1$ on a set of positive measure and hence may fail to be locally $C^2$ a.e. as required in the proof of Theorem~\myref{theo:clarkcrit}.
\begin{proposition}\label{claim:counter}
For all $m \in \N\setminus\{0\}$, there exists a Lipschitz continuous and piecewise-$C^{\infty}$ function
$
f\colon\Rm\to\R
$
as in Definition~\myref{def:piecewisec1}
that is not locally $C^1$ on a set of positive measure.
\end{proposition}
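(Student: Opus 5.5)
We will use a fat Cantor set. Fix $K\subset[0,1]$ closed, nowhere dense, with $\lambda^1(K)>0$, and let $\phi(t)\coloneq\dist(t,K)$ on $\R$. The idea is to build a one-dimensional piecewise-$C^\infty$ Lipschitz function $g\colon\R\to\R$ that fails to be $C^1$ on every neighborhood of every point of $K$. Then set $f(x)\coloneq g(x_1)$ on $\Rm$. Being piecewise-$C^\infty$ passes to $f$ via the partition $\{P^j\times\R^{m-1}\}$ with components $x\mapsto g^j(x_1)$. The failure set $K\times\R^{m-1}$ then has positive (indeed infinite) $\lambda^m$ measure; if finite measure is preferred, intersect with $K\times[0,1]^{m-1}$.

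\textbf{Construction of $g$.} The complement of $K$ in $(0,1)$ is a countable union of disjoint open intervals $(a_n,b_n)$ whose lengths $\ell_n\to0$. On each gap we place a smooth bump of a fixed profile: let $\theta\in C^\infty(\R)$ be supported in $[0,1]$ with $\theta\ge0$, $\|\theta'\|_\infty\le 1$ and $\max\theta'=1/2$, and set
$$g(t)\coloneq\sum_n \ell_n\,\theta\!\left(\frac{t-a_n}{\ell_n}\right).$$
Then $g$ is Lipschitz with constant at most $1$. It vanishes on $K$ and outside $(0,1)$, and it is continuous. Since $g\ge0$ is nonzero inside every gap and $K$ is nowhere dense, every point $t_0\in K$ is a limit of gaps. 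Hence every neighborhood of $t_0$ contains points where $g'=1/2$ (inside small gaps) as well as points of $K$. Suppose $g$ were $C^1$ near $t_0$. Then $g'$ would have to vanish at density points of $K$, because $g=0$ on $K$ and derivatives vanish at accumulation points of the zero set. But $g'$ takes the value $1/2$ at points arbitrarily close to those density points. This contradicts continuity of $g'$, so $g$ is not $C^1$ on any neighborhood of any point of $K$.

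\textbf{Piecewise-$C^\infty$ representation.} This is the main obstacle, because a single $C^\infty$ component cannot agree with the infinitely many bumps; the representation must use finitely many components. The plan is as follows.

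1. Take the parts $P^1\coloneq K\cup(\R\setminus(0,1))$ with $f^1\equiv0$, and $P^2\coloneq\bigcup_n(a_n,b_n)$ with a single smooth $f^2$ that agrees with $g$ on every gap.
2. Then $f^2$ is defined and $C^\infty$ on the open set $(0,1)\setminus K$, and what is needed is a $C^\infty$ extension of it to $\R$.
3. A smooth extension exists if all derivatives of $g|_{\text{gaps}}$ tend to $0$ as one approaches $K$. This is a Whitney extension, with zero jets on $K$.
4. To get this, replace the fixed profile by rescaled profiles with decaying amplitude, $\varepsilon_n\,\ell_n^{\,k}\theta((t-a_n)/\ell_n)$. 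Choose $\varepsilon_n$ so that the $k$-th derivatives, of size $\varepsilon_n\ell_n^{-k}$ times the profile derivatives, tend to $0$ for every $k$. For example take amplitude $e^{-1/\ell_n}$, so that $\ell_n^{-k}e^{-1/\ell_n}\to0$ for every $k$.
5. The function $G\coloneq\sum_n e^{-1/\ell_n}\theta((t-a_n)/\ell_n)$ is then globally $C^\infty$: this is the standard flat-function argument, using Whitney's extension theorem or a direct check that derivatives vanish on $K$.

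\textbf{Choosing the function.} The resulting $G$ is smooth and so cannot itself serve as the non-$C^1$ example. Instead we take $f^2\coloneq G$ but define the function as a mismatch of components. Let $f^1\coloneq 0$ on $K$, $f^2\coloneq G$, and $f^3\coloneq t$ (or another fixed smooth function). The representation is $g\coloneq f^2$ on one family of gaps and $g\coloneq f^3$ elsewhere, arranged so that the pieces glue continuously.

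The simplest concrete version is $g(t)\coloneq\min\{\phi(t),\,c\}$-type constructions. Concretely, we will take $g\coloneq G\cdot 0+\mathbbm{1}$-free form: set $f^1\coloneq0$, $f^2\coloneq h$, where $h$ is a $C^\infty$ function with $h=0$ exactly on $K$ and $h>0$ off $K$ (such an $h$ exists for any closed set). Define $g\coloneq h$ on gaps of even index and $g\coloneq-h$ on gaps of odd index, with $g=0$ on $K$. Then $g$ is continuous, Lipschitz, and piecewise-$C^\infty$ with components $\{0,h,-h\}$ and parts $\{K\cup\text{rest},\ \text{even gaps},\ \text{odd gaps}\}$, which are Borel.

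For $g$ to fail $C^1$ near each point of $K$, choose $h$ so that $\sup_{(a_n,b_n)}|h'|$ is bounded below along gaps accumulating at each point of $K$. This is incompatible with $h$ being smooth and vanishing on $K$. So this step requires care: the fallback is to take components $f^2(t)=t$ and $f^3(t)=-t$ shifted per gap, which needs infinitely many components. I expect the resolution to be the sign-flip construction with $h$ vanishing on $K$ and $\pm$ pieces. Then $g'$ from the two sides has the forms $h'$ and $-h'$, and non-differentiability arises only where $h'\ne0$ on $\partial$gaps. Consequently $h$ must vanish on $K$ to first order only at isolated points, and verifying this is the crux I would work out first.
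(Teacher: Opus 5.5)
There is a genuine gap. The proposal never produces a function that is both piecewise-$C^\infty$ with \emph{finitely} many pieces and non-$C^1$ near $K$. The fixed-profile $g$ is non-$C^1$ near $K$ but, as you note, has no finite smooth representation. The per-gap shifted affine fallback needs infinitely many components. The flat sum $G$ is representable but globally $C^\infty$. The final sign-flip candidate fails $C^1$ nowhere. A fat Cantor set is perfect, so if $h\in C^\infty(\R)$ vanishes on $K$, then $h'=0$ on $K$, because each $t\in K$ is an accumulation point of the zero set of $h$. For $g$ equal to $\pm h$ on alternating gaps and $0$ elsewhere, at every $t\in K$ you get $|g(s)-g(t)|\leq|h(s)|=o(|s-t|)$, so $g'(t)=0$. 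On the gaps, $g'=\pm h'\to0$ as one approaches $K$. Hence $g\in C^1(\R)$. Your proposed remedy, $h$ vanishing to first order only at isolated points of $K$, cannot work either. $K$ has no isolated points, and for any closed $K\subset\R$ the isolated points form a countable, hence null, set. The root problem is that you try to place the kinks \emph{on} $K$, where any smooth component vanishing on $K$ is automatically flat to first order.

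The missing idea is to place the kinks at transversal zeros of a smooth function lying \emph{inside} the gaps and accumulating on $K$. Your steps 4--5 already contain the needed tool; you only need a sign-changing profile. Let $z_n$ be the midpoint of the $n$-th gap, take $0<r_n<\ell_n/2$, and let $\varphi\geq0$ be a $C^\infty$ bump supported in $[-1,1]$ with $\varphi(0)>0$. Set $G(t)\coloneq\sum_n e^{-1/r_n}(t-z_n)\varphi((t-z_n)/r_n)$. The supports are disjoint, and the $k$-th derivative of the $n$-th term is bounded by $C_k e^{-1/r_n}r_n^{1-k}\to0$. So $G\in C^\infty(\R)$ with $G'$ bounded, $G(z_n)=0$ and $G'(z_n)=e^{-1/r_n}\varphi(0)>0$. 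Now define $f(x)\coloneq|G(x_1)|$ with parts $\{G(x_1)\geq0\}$, $\{G(x_1)<0\}$ and components $\pm G(x_1)$. This $f$ is Lipschitz and piecewise-$C^\infty$ with two pieces, and it is not differentiable at any point of $\{z_n\}\times\R^{m-1}$. Since $K$ is perfect and nowhere dense, a whole gap lies between any $t_0\in K$ and any nearby $t_1\in K$. So the midpoints $z_n$ accumulate at every point of $K$, and $f$ is not $C^1$ on any neighborhood of any point of $K\times\R^{m-1}$, a set of positive measure. This is essentially the paper's construction. The paper uses an arbitrary sequence in $\R\setminus C$ whose accumulation set is exactly $C$ instead of the gap midpoints.
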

\begin{proof}
We explicitly construct such a function.

Fix $m \in \N\setminus\{0\}$. Let $C\subset[0,1]$ be a fat Cantor set, also called a
Smith--Volterra--Cantor set. Then $C$ is compact, has empty interior, and has
positive Lebesgue measure.

From \cite[Lemma A.6]{bolte2026adjoint}, there exists a sequence $(z_{\ell})_{{\ell}\in \N} \subset \R \setminus C$ whose set of accumulation points is exactly $C$ and $z_i \neq z_{\ell}$ for $i \neq \ell$. For any ${\ell} \in \N$, consider the interval $I_{\ell} \coloneq (z_{\ell} - r_{\ell}, z_{\ell} + r_{\ell})$ with $0 < r_{\ell} < 1$ and $r_{\ell} \to 0$. We can choose the radii sufficiently small such that for all $\ell, i, s \in \N$, $\cl I_{\ell} \cap C = \varnothing$ and $\cl I_{i} \cap \cl I_s = \varnothing$ for $s \neq i$. Such a choice is always possible because each $z_{\ell}$ sits at a positive distance from both $C$ and all other points in the sequence.

Let $\varphi\colon \R \to \R$ be a $C^\infty$ bump function defined as
$$
\varphi(x)
=
\begin{cases}
e^{-1/(1 - x^2)} & \text{if } |x| < 1, \\
0 & \text{if } |x| \geq 1.
\end{cases}
$$
For all $\ell \in \N $, let $\varepsilon_{\ell} = e^{-1/r_{\!\ell}^{\ell}}$. Define the component functions $\phi_{\ell} \colon \R \to \R$ by
$$
\phi_{\ell}(x) \coloneq \varepsilon_{\ell}\left(x-z_{\ell}\right)\varphi\left(\frac{x-z_{\ell}}{r_{\ell}}\right),
$$
and define the aggregate function
$\displaystyle
g(x) \coloneq \sum_{{\ell}=0}^\infty \phi_{\ell}(x).
$

One can verify that, for all $i \in \N $, the series $\sum_{\ell=0}^{\infty}\phi_{\ell}^{(i)}$ of $i$-th derivatives converges uniformly. Thus, $g$ is $C^\infty$ on $\R$ by induction on $i$, and $g^{(i)}(x) = 0$ for all $x \in C$ and all $i \in \N$. Because the intervals $I_{\ell}$ are pairwise disjoint, evaluating $g$ and its derivative at the center points yields
$$
g(z_{\ell})=0 \quad \text{and} \quad g^{\prime}(z_{\ell}) = \varepsilon_{\ell}\varphi(0) > 0.
$$
Thus, every $z_{\ell}$ is a simple zero of $g$, so $g$ changes sign at each $z_{\ell}$. 

For $x=(x_1,x^{*})\in\R\times\R^{m-1}$, define
$$
f(x)\coloneq|g(x_1)|.
$$
When $m=1$, the variable $x^{*}$ is simply omitted. Let
$$
P^1\coloneq\{x\in\Rm:g(x_1)\geq0\},
\qquad
P^2\coloneq\{x\in\Rm:g(x_1)<0\},
$$
and define the globally $C^\infty$ functions
$$
f^1(x) \coloneq g(x_1),
\qquad
f^2(x) \coloneq -g(x_1).
$$
Then $\{P^1,P^2\}$ is a finite Borel partition of $\Rm$ and
$$
f=f^1\quad\text{on }P^1,
\qquad
f=f^2\quad\text{on }P^2.
$$
Hence, $f$ is piecewise-$C^\infty$ with respect to a finite Borel partition. Since the absolute value is $1$-Lipschitz and $g^{\prime}$ is globally bounded on $\R$, $f$ is Lipschitz continuous on $\Rm$.

Set
$\displaystyle
U \coloneq \bigcup_{\ell \in \N} (z_{\ell} - r_{\ell}, z_{\ell}).
$
Since $\varphi > 0$ on $(-1, 1)$,
$
g(x_1) < 0 \iff x_1 \in U.
$
Therefore
$$
P^1 = (\R \setminus U) \times \R^{m-1}, \quad P^2 = U \times \R^{m-1}.
$$
Because $r_{\ell} \to 0$ and the accumulation set of $(z_{\ell})$ is exactly $C$,
$\displaystyle
\cl {U} = C \cup \bigcup_{\ell \in \mathbb{N}} [z_{\ell} - r_{\ell}, z_{\ell}].
$
Thus the interface is
$$
\mathcal{I} = \cl P^1 \cap \cl P^2
= \left( (\R \setminus U) \cap \cl {U} \right) \times \R^{m-1}
= \left( C \cup \{ z_{\ell} - r_{\ell}, z_{\ell} : \ell \in \mathbb{N} \} \right) \times \R^{m-1},
$$
and has positive Lebesgue measure because $C$ is a fat Cantor set. Hence, the interface condition~\eqref{eq:interface} fails. 

We now show that $f$ is not locally $C^1$ on a set of positive measure. Fix $\ell\in\N$ and $y\in\R^{m-1}$, and set
$
x_{\ell,y}\coloneq(z_{\ell},y).
$
Since $g^{\prime}(z_{\ell})>0$, one has
$$
\lim_{h\to0^-}
\frac{f(x_{\ell,y}+he_1)-f(x_{\ell,y})}{h}
=
-g^{\prime}(z_{\ell}),
$$
whereas
$$
\lim_{h\to0^+}
\frac{f(x_{\ell,y}+he_1)-f(x_{\ell,y})}{h}
=
g^{\prime}(z_{\ell}),
$$
where $e_1 = (1, 0, \ldots, 0) \in \Rm$. Therefore, $f$ is not differentiable at any point $x_{\ell,y}$.

Now fix $c\in C$ and $y\in\R^{m-1}$. Since $c$ is an accumulation point of
$(z_{\ell})_{\ell\in\N}$, every neighborhood of $(c,y)$ contains a point of the
form $(z_{\ell},y)$ at which $f$ is not differentiable. 
It follows that $f$ is
not locally $C^1$ at $(c,y)$, and the set
$
C\times[0,1]^{m-1}
$
has positive Lebesgue measure and consists entirely of points
at which $f$ is not locally $C^1$.
\end{proof}

\section{Piecewise smoothness and Whitney stratification}\label{sect:counter}
In this section, we compare the class of piecewise smooth functions and that of functions whose graphs admit a Whitney stratification. 

Let us start by defining a Whitney stratification and Whitney stratifiable functions.
Let $G(q,\Rm)$ denote the Grassmannian of $q$-dimensional linear subspaces of $\Rm$, endowed with its natural manifold topology. Convergence of tangent spaces is understood with respect to this topology. 
\begin{definition}\label{def:strat}
Let $X\subset\Rm$ and let $I \subset \N$. A \emph{Whitney stratification} of $X$ is a locally finite
partition
$
\mathcal{S}=\{S_i\}_{i\in I}
$
of $X$ into pairwise disjoint $C^1$ submanifolds of $\Rm$, called
\emph{strata}, satisfying the following properties:

\begin{enumerate}
\item \textbf{Frontier condition.} For any two strata
$S,R\in\mathcal{S}$,
$$
S\cap\cl R\neq\varnothing
\implies
S \subset \cl R.
$$

\item \textbf{Whitney's condition $(a)$.} Whenever $S,R\in\mathcal{S}$ satisfy
$
S\subset \cl R,
$
and whenever $(x_{\ell})_{\ell\in\N}\subset R$ converges to a point $x\in S$ such that
the tangent spaces $T_{x_{\ell}}R$ converge to a linear subspace $L$ of $\Rm$, one
has
$
T_x S\subset L,
$
\end{enumerate}
where $T_{x} S$ (respectively, $T_{x_{\ell}}R$) denotes the tangent space of the manifold $S$ at $x$ (respectively, $R$ at $x_{\ell}$).
\end{definition}
\begin{remark}
    We do not assume that the strata are connected, although this stronger assumption appears in some definitions in the literature. We also note the existence of the stronger Whitney's condition (b), which will not be used here. Moreover, higher regularity may be imposed by requiring the strata to be of class $C^p$ for some $p \geq 1$. Unless otherwise specified, by Whitney stratification we mean a $C^1$ stratification satisfying Whitney's condition (a).
\end{remark}
\begin{definition}\label{def:whit} With a slight abuse of terminology, we call a function whose graph admits a Whitney stratification a \emph{Whitney stratifiable function}.
\end{definition}
\begin{remark}\label{rmk:counter}
    It is known that, in dimensions $m \geq 2$, the Euclidean norm is semialgebraic but is not piecewise-$C^p$, because the origin is its unique point of nondifferentiability, and piecewise-$C^p$ functions cannot have isolated points of nondifferentiability \cite[Theorem~1]{rockafellar2003property}.

    Conversely, since the classes overlap---the $\ell_1$ norm is a standard common example---we now ask whether every piecewise smooth function is Whitney stratifiable. From Proposition~\myref{claim:counter}, there exists a piecewise smooth function which is not locally $C^1$ on a set of positive measure, hence this function cannot be Whitney stratifiable. Indeed, for a locally Lipschitz and Whitney stratifiable function, a stratification of its graph induces a stratification of its domain, and the restriction of the function to each stratum is $C^1$; see \cite[Section~3]{bolte2007clarke}. The union of the lower-dimensional strata has measure zero, while the function is locally $C^1$ on the union of full-dimensional strata, since these strata are open.
\end{remark}
In the next theorem, we prove that, even with the interface condition \eqref{eq:interface} which, by Lemma~\myref{lem:piecewiseCplocallyC2ae}, ensures local smoothness of arbitrary order a.e., a piecewise smooth function is not necessarily Whitney stratifiable.
\begin{theorem}\label{theo:counter2}
For all $m \in \N\setminus\{0\}$, there exists a Lipschitz continuous and piecewise-$C^{\infty}$ function
$f\colon\Rm\to\R$ as in Definition~\myref{def:piecewisec1} with
Lebesgue-negligible interface,
\begin{equation*}
    \mathcal{I}
    \coloneq
    \bigcup_{\substack{j,\ell\in\{1,\ldots,J\}\\j\neq \ell}}
    \left(\cl{P^j}\cap\cl{P^{\ell}}\right),
\end{equation*}
whose graph admits no Whitney stratification.
\end{theorem}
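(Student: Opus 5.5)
We modify the construction in the proof of Proposition~\myref{claim:counter}. The interface there fails to be null only because the fat Cantor set $C$ has positive measure. We keep everything else and replace $C$ by the standard middle-thirds Cantor set $K\subset[0,1]$. This set is compact, perfect, Lebesgue null, and has Hausdorff dimension $\log 2/\log 3>0$. The obstruction to stratifiability will then be dimensional rather than measure-theoretic.

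\emph{Construction.} By the same argument as \cite[Lemma~A.6]{bolte2026adjoint} (any nonempty compact set with empty interior is the accumulation set of a sequence of distinct points in its complement), we choose distinct $z_\ell\in\R\setminus K$ whose set of accumulation points is exactly $K$. We then pick radii $r_\ell\to0$ with $\cl I_\ell\cap K=\varnothing$ and the intervals $\cl I_\ell$ pairwise disjoint. We define $\phi_\ell$, $g=\sum_\ell\phi_\ell$, $f(x)=|g(x_1)|$, the two parts $P^1,P^2$ and the pieces $f^1=g(x_1)$, $f^2=-g(x_1)$ exactly as before. The proof of Proposition~\myref{claim:counter} then transfers verbatim and gives the following:
\begin{itemize}
\item $g$ is $C^\infty$, and all its derivatives vanish on $K$;
\item $f$ is Lipschitz and piecewise-$C^\infty$;
\item $f$ is not differentiable at any point $(z_\ell,y)$;
\item $f$ is not locally $C^1$ at any point of $K\times\R^{m-1}$.
\end{itemize}
The same computation gives $\mathcal{I}=\big(K\cup\{z_\ell-r_\ell,z_\ell:\ell\in\N\}\big)\times\R^{m-1}$. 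This is a null compact set together with countably many points, times $\R^{m-1}$, so it is Lebesgue negligible.

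\emph{Non-stratifiability.} Suppose, for contradiction, that the graph of $f$ admits a Whitney stratification. As recalled in Remark~\myref{rmk:counter}, by \cite[Section~3]{bolte2007clarke}, projecting the strata of the graph yields a locally finite partition of $\Rm$ into $C^1$ submanifolds on each of which $f$ is $C^1$. The $m$-dimensional strata are open subsets of $\Rm$, so $f$ is locally $C^1$ on each of them. Consequently every point of $K\times\R^{m-1}$ lies in a stratum of dimension at most $m-1$.

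Fix a point $(c,y)\in K\times\R^{m-1}$. By local finiteness, a small neighbourhood $W$ of $(c,y)$ meets only finitely many strata. Hence $(K\times\R^{m-1})\cap W$ is covered by finitely many $C^1$ submanifolds of dimension at most $m-1$. Each such submanifold is a countable union of $C^1$ graphs over $(m-1)$-dimensional (or lower) coordinate domains, hence has $\sigma$-finite $\mathcal H^{m-1}$ measure and Hausdorff dimension at most $m-1$. On the other hand, $(K\times\R^{m-1})\cap W$ contains a product $(K\cap J)\times B$, where $J$ is an open interval with $K\cap J\neq\varnothing$ and $B$ is an open ball in $\R^{m-1}$. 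Since $K$ is self-similar, $K\cap J$ contains a scaled copy of $K$. The standard product inequality for Hausdorff dimension then gives
$$
\dim_H\big((K\cap J)\times B\big)\;\geq\; \frac{\log 2}{\log 3}+m-1\;>\;m-1,
$$
which is a contradiction. For $m=1$ this reduces to a simpler statement: a $0$-dimensional submanifold is a discrete, hence countable, set, whereas $K\cap J$ is uncountable. In that case no Hausdorff dimension is needed.

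\emph{Expected main obstacle.} The delicate step is the reduction from a Whitney stratification of the graph to the induced stratification of the domain with $f$ being $C^1$ on the strata. The key points are that the projection of a stratum is a $C^1$ submanifold, and that open strata give local $C^1$ regularity. I would cite \cite{bolte2007clarke} for this, as Remark~\myref{rmk:counter} does. If that citation turns out to require Whitney's condition~(a) in a form not directly available, the fallback is to argue directly on the graph. Over $K\times\R^{m-1}$ the graph is not locally a $C^1$ $m$-manifold, so its points there must lie in graph strata of dimension at most $m-1$. The graph is bi-Lipschitz to $\Rm$ via the projection, so Hausdorff dimension is preserved, and the same dimension count applies.
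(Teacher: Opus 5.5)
Your proof is correct. The construction and the null-interface check coincide with the paper's, which also just replaces the fat Cantor set by the middle-thirds one. Your non-stratifiability argument, however, takes a different route.

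\emph{The paper's route.} It stays on the graph. A tangent-space computation shows that any stratum through $(z_\ell,y,0)$ has tangent space contained in $\{0\}\times\R^{m-1}\times\{0\}$. Local finiteness, the frontier condition, compactness of the Grassmannian and Whitney's condition~(a) then transfer this dimension bound to the strata through the points of $C\times\R^{m-1}\times\{0\}$.

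\emph{Your route.} You project to the domain and combine two facts: $f$ is not locally $C^1$ at any point of $K\times\R^{m-1}$ (Proposition~\myref{claim:counter}), and full-dimensional strata are open. The step you flag as delicate is elementary here. Since $f$ is Lipschitz, no stratum contained in $\graph f$ has a vertical tangent vector. Hence the projection restricted to a stratum is an injective immersion with continuous inverse $x\mapsto(x,f(x))$, that is, a $C^1$ embedding onto a submanifold on which $f$ is $C^1$.

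\emph{What each buys.} Your argument never uses Whitney's condition~(a) or the frontier condition. It therefore shows more: $\graph f$ admits no locally finite partition into $C^1$ submanifolds at all. It also makes explicit two points the paper leaves implicit: why a suitable sequence $(z_\ell)$ exists for the middle-thirds set, and why a $d$-dimensional $C^1$ submanifold has Hausdorff dimension at most $d$. The paper's route avoids relying on the projection result. Its appeal to Whitney~(a) is nonetheless dispensable, by the same observation you use: an $m$-dimensional stratum through $(c,y,0)$ would be open in $\graph f$. It would therefore contain the nearby points $(z_{\ell_j},y,0)$, which lie in strata of dimension at most $m-1$.
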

\begin{proof} Fix $m \in \N\setminus\{0\}$. We construct a Lipschitz continuous and  piecewise-$C^{\infty}$ function $f\colon \Rm \to \R$ that satisfies the condition on the interface $\mathcal{I}$ and whose graph is not Whitney stratifiable.

Now let $C$ be the standard middle-thirds Cantor set. Thus, $C$ is compact, perfect, has empty interior, and has Lebesgue measure zero. Construct $f\colon \Rm \to \R$ by repeating the construction in the proof of Proposition~\myref{claim:counter} with this choice of $C$. We recall that $f$ is Lipschitz continuous and piecewise-$C^{\infty}$. For this representation of $f$, the interface is
$$
\mathcal{I}
=
\left(
C\cup\{z_{\ell}-r_{\ell},z_{\ell}:\ell\in\N\}
\right)\times\R^{m-1},
$$
and has Lebesgue measure zero. 

Now, suppose, towards a contradiction, that $\graph f$ admits a Whitney stratification $\mathcal{S}$.

We first determine the possible dimensions of the strata containing
nondifferentiability points of $f$. Fix $\ell\in\N$ and
$y\in\R^{m-1}$, and set
$$
w_{\ell,y}\coloneq(z_{\ell},y,0)\in\graph f,
\qquad
d_{\ell}\coloneq g^{\prime}(z_{\ell})>0.
$$
Let $S\in\mathcal{S}$ be the stratum containing $w_{\ell,y}$. We claim that
$
T_{w_{\ell,y}}S
\subset
\{0\}\times\R^{m-1}\times\{0\}.
$

Indeed, let
$
w=(u,v,\eta)\in T_{w_{\ell,y}}S.
$
There exists a $C^1$ curve
$
\gamma\colon(-1,1)\to S
$
such that
$
\gamma(0)=w_{\ell,y}
$
and
$
\dot{\gamma}(0)=w.
$
Since $\gamma$ takes values in $\graph f$, we may write
$$
\gamma(t)
=
\bigl(
z_{\ell}+\alpha(t),
y+\beta(t),
|g(z_{\ell}+\alpha(t))|
\bigr),
$$
where
$
\alpha(0)=0,
\dot{\alpha}(0)=u,
\beta(0)=0,
\dot{\beta}(0)=v.
$
Since $g(z_{\ell})=0$ and $g^{\prime}(z_{\ell})=d_{\ell}>0$, one has
$
|g(z_{\ell}+s)|
=
d_{\ell}|s|+o(|s|)
\text{ as }s \to 0.
$
If $u\neq0$, then
$
\alpha(t)=ut+o(t),
$
and consequently
$$
\lim_{t\to0^+}
\frac{|g(z_{\ell}+\alpha(t))|}{t}
=
d_{\ell}|u|,
\quad \text{and} \quad
\lim_{t\to0^-}
\frac{|g(z_{\ell}+\alpha(t))|}{t}
=
-d_{\ell}|u|.
$$
This contradicts the differentiability of the last component of $\gamma$ at
$t=0$. Therefore, $u=0$. It follows that $\alpha(t)=o(t)$, and hence
$
|g(z_{\ell}+\alpha(t))|
=
o(|t|).
$
Thus, $\eta=0$, which proves the claim:
$
T_{w_{\ell,y}}S
\subset
\{0\}\times\R^{m-1}\times\{0\}.
$
In particular,
$
\dim S \leq m-1.
$

We next show that every point of
$
C\times\R^{m-1}\times\{0\}
$
also belongs to a stratum of dimension at most $m-1$. 

Fix
$c\in C$ and $y\in\R^{m-1}$, and let
$
u \coloneq(c,y,0)\in\graph f.
$
Choose a subsequence $(z_{\ell_j})_{j\in\N}$ converging to $c$, and set
$
u_j\coloneq(z_{\ell_j},y,0).
$
Let $S\in\mathcal{S}$ be the stratum containing $u$. By local finiteness, there
exists a neighborhood $U$ of $u$ that intersects only finitely many strata.
Since $u_j\to u$, all sufficiently large $u_j$ belong to $U$. After passing to
a subsequence, there is therefore a single stratum $R\in\mathcal{S}$ such that
$
u_j\in R
\text{ for all }j\in\N.
$
The preceding argument gives
$
\dim R \leq m-1.
$
If $S=R$, then immediately $\dim S\leq m-1$. Suppose that $S\neq R$. Since
$u \in S\cap \cl R$, the frontier condition yields
$
S\subset \cl R.
$
The Grassmannian of $(\dim R)$-dimensional linear subspaces of $\R^{m+1}$ is
compact. Thus, after passing to a further subsequence, there exists a linear
subspace $L\subset\R^{m+1}$ such that
$
T_{u_j}R \to L.
$
Whitney's condition $(a)$, applied to the pair $(S,R)$, gives
$
T_uS\subset L.
$
Consequently,
$
\dim S
\leq
\dim L
=
\dim R
\leq
m-1.
$
Therefore, any point of
$
C\times\R^{m-1}\times\{0\}
$
belongs to a stratum of dimension at most $m-1$.

Set
$
K
\coloneq
C\times[0,1]^{m-1}\times\{0\}.
$
By local finiteness and compactness, $K$ intersects and is covered by only finitely many strata,
say $S_1,\ldots,S_N$. By the above argument,
$
\dim S_i\leq m-1
\text{ for every }i=1,\ldots,N.
$
It follows that the Hausdorff dimension of the union is at most $m - 1$, i.e.,
$\displaystyle
\dim_{\mathcal{H}}\left( \bigcup_{i=1}^N S_i\right) \leq m - 1.
$
Since $K \subset \bigcup_{i=1}^N S_i$, 
we have
\begin{align*}
    m-1 \geq \dim_{\mathcal{H}} K &\geq \dim_{\mathcal{H}} C + \dim_{\mathcal{H}} [0,1]^{m-1} + \dim_{\mathcal{H}} \{0\} \geq \frac{\log 2}{\log 3} + m - 1 > m - 1.
\end{align*}
This is a contradiction. Therefore, $\graph f$ admits no Whitney stratification.
\end{proof}

\runinsectionstar[\large]{Acknowledgements} The author warmly thanks J\'er\^ome Bolte and Edouard Pauwels for helpful discussions and for continued support and advice. The author also thanks Edouard Pauwels for suggesting the application section.

\runinsectionstar[\large]{Funding} This work has been supported by the Occitanie region, the European Regional Development Fund (ERDF), and the French government, through the France 2030 project managed by the National Research Agency (ANR) with the reference number ``ANR-22-EXES-0015''.

\bibliographystyle{plainnat}%
\bibliography{references_paper}
\end{document}